\begin{document}

\restylefloat{table}
\newtheorem{thm}[equation]{Theorem}
\numberwithin{equation}{section}
\newtheorem{cor}[equation]{Corollary}
\newtheorem{expl}[equation]{Example}
\newtheorem{rmk}[equation]{Remark}
\newtheorem{conv}[equation]{Convention}
\newtheorem{claim}[equation]{Claim}
\newtheorem{lem}[equation]{Lemma}
\newtheorem{sublem}[equation]{Sublemma}
\newtheorem{conj}[equation]{Conjecture}
\newtheorem{defin}[equation]{Definition}
\newtheorem{diag}[equation]{Diagram}
\newtheorem{prop}[equation]{Proposition}
\newtheorem{notation}[equation]{Notation}
\newtheorem{tab}[equation]{Table}
\newtheorem{fig}[equation]{Figure}
\newcounter{bean}
\renewcommand{\theequation}{\thesection.\arabic{equation}}

\raggedbottom \voffset=-.7truein \hoffset=0truein \vsize=8truein
\hsize=6truein \textheight=8truein \textwidth=6truein
\baselineskip=18truept

\def\mapright#1{\ \smash{\mathop{\longrightarrow}\limits^{#1}}\ }
\def\mapleft#1{\smash{\mathop{\longleftarrow}\limits^{#1}}}
\def\mapup#1{\Big\uparrow\rlap{$\vcenter {\hbox {$#1$}}$}}
\def\mapdown#1{\Big\downarrow\rlap{$\vcenter {\hbox {$\ssize{#1}$}}$}}
\def\mapne#1{\nearrow\rlap{$\vcenter {\hbox {$#1$}}$}}
\def\mapse#1{\searrow\rlap{$\vcenter {\hbox {$\ssize{#1}$}}$}}
\def\mapr#1{\smash{\mathop{\rightarrow}\limits^{#1}}}
\def\ss{\smallskip}
\def\s{\sigma}
\def\l{\lambda}
\def\vp{v_1^{-1}\pi}
\def\at{{\widetilde\alpha}}

\def\sm{\wedge}
\def\la{\langle}
\def\ra{\rangle}
\def\ev{\text{ev}}
\def\od{\text{od}}
\def\on{\operatorname}
\def\ol#1{\overline{#1}{}}
\def\spin{\on{Spin}}
\def\cat{\on{cat}}
\def\Lbar{\overline{\Lambda}}
\def\qed{\quad\rule{8pt}{8pt}\bigskip}
\def\ssize{\scriptstyle}
\def\a{\alpha}
\def\bz{{\Bbb Z}}
\def\Rhat{\hat{R}}
\def\im{\on{im}}
\def\ct{\widetilde{C}}
\def\ext{\on{Ext}}
\def\sq{\on{Sq}}
\def\eps{\epsilon}
\def\ar#1{\stackrel {#1}{\rightarrow}}
\def\br{{\bold R}}
\def\bC{{\bold C}}
\def\bA{{\bold A}}
\def\bB{{\bold B}}
\def\bD{{\bold D}}
\def\bC{{\bold C}}
\def\bh{{\bold H}}
\def\bQ{{\bold Q}}
\def\bP{{\bold P}}
\def\bx{{\bold x}}
\def\bo{{\bold{bo}}}
\def\dh{\widehat{d}}
\def\si{\sigma}
\def\Vbar{{\overline V}}
\def\dbar{{\overline d}}
\def\wbar{{\overline w}}
\def\Sum{\sum}
\def\tfrac{\textstyle\frac}

\def\tb{\textstyle\binom}
\def\Si{\Sigma}
\def\w{\wedge}
\def\equ{\begin{equation}}
\def\b{\beta}
\def\G{\Gamma}
\def\L{\Lambda}
\def\g{\gamma}
\def\d{\delta}
\def\k{\kappa}
\def\cE{\mathcal{E}}
\def\psit{\widetilde{\Psi}}
\def\tht{\widetilde{\Theta}}
\def\psiu{{\underline{\Psi}}}
\def\thu{{\underline{\Theta}}}
\def\aee{A_{\text{ee}}}
\def\aeo{A_{\text{eo}}}
\def\aoo{A_{\text{oo}}}
\def\aoe{A_{\text{oe}}}
\def\vbar{{\overline v}}
\def\endeq{\end{equation}}
\def\sn{S^{2n+1}}
\def\zp{\bold Z_p}
\def\cR{{\mathcal R}}
\def\P{{\mathcal P}}
\def\cQ{{\mathcal Q}}
\def\cj{{\cal J}}
\def\zt{{\bold Z}_2}
\def\bs{{\bold s}}
\def\bof{{\bold f}}
\def\bq{{\bold Q}}
\def\be{{\bold e}}
\def\Hom{\on{Hom}}
\def\ker{\on{ker}}
\def\kot{\widetilde{KO}}
\def\coker{\on{coker}}
\def\da{\downarrow}
\def\colim{\operatornamewithlimits{colim}}
\def\zphat{\bz_2^\wedge}
\def\io{\iota}
\def\om{\omega}
\def\Prod{\prod}
\def\e{{\cal E}}
\def\zlt{\Z_{(2)}}
\def\exp{\on{exp}}
\def\abar{{\overline a}}
\def\xbar{{\overline x}}
\def\ybar{{\overline y}}
\def\zbar{{\overline z}}
\def\mbar{{\overline m}}
\def\nbar{{\overline n}}
\def\sbar{{\overline s}}
\def\kbar{{\overline k}}
\def\bbar{{\overline b}}
\def\et{{\widetilde E}}
\def\ni{\noindent}
\def\tsum{\textstyle \sum}
\def\coef{\on{coef}}
\def\den{\on{den}}
\def\lcm{\on{l.c.m.}}
\def\Ext{\operatorname{Ext}}
\def\iso{\approx}
\def\lra{\longrightarrow}
\def\vi{v_1^{-1}}
\def\ot{\otimes}
\def\psibar{{\overline\psi}}
\def\thbar{{\overline\theta}}
\def\mhat{{\hat m}}
\def\exc{\on{exc}}
\def\ms{\medskip}
\def\ehat{{\hat e}}
\def\etao{{\eta_{\text{od}}}}
\def\etae{{\eta_{\text{ev}}}}
\def\dirlim{\operatornamewithlimits{dirlim}}
\def\gt{\widetilde{L}}
\def\lt{\widetilde{\lambda}}
\def\st{\widetilde{s}}
\def\ft{\widetilde{f}}
\def\sgd{\on{sgd}}
\def\lfl{\lfloor}
\def\rfl{\rfloor}
\def\ord{\on{ord}}
\def\gd{{\on{gd}}}
\def\rk{{{\on{rk}}_2}}
\def\nbar{{\overline{n}}}
\def\MC{\on{MC}}
\def\lg{{\on{lg}}}
\def\cH{\mathcal{H}}
\def\cS{\mathcal{S}}
\def\cP{\mathcal{P}}
\def\N{{\Bbb N}}
\def\Z{{\Bbb Z}}
\def\Q{{\Bbb Q}}
\def\R{{\Bbb R}}
\def\C{{\Bbb C}}
\def\Lb{\overline\Lambda}
\def\mo{\on{mod}}
\def\xt{\times}
\def\notimm{\not\subseteq}
\def\Remark{\noindent{\it  Remark}}
\def\kut{\widetilde{KU}}
\def\Eb{\overline E}
\def\*#1{\mathbf{#1}}
\def\0{$\*0$}
\def\1{$\*1$}
\def\22{$(\*2,\*2)$}
\def\33{$(\*3,\*3)$}
\def\ss{\smallskip}
\def\ssum{\sum\limits}
\def\dsum{\displaystyle\sum}
\def\la{\langle}
\def\ra{\rangle}
\def\on{\operatorname}
\def\proj{\on{proj}}
\def\od{\text{od}}
\def\ev{\text{ev}}
\def\o{\on{o}}
\def\U{\on{U}}
\def\lg{\on{lg}}
\def\a{\alpha}
\def\bz{{\Bbb Z}}
\def\eps{\varepsilon}
\def\bc{{\bold C}}
\def\bN{{\bold N}}
\def\bB{{\bold B}}
\def\bW{{\bold W}}
\def\nut{\widetilde{\nu}}
\def\tfrac{\textstyle\frac}
\def\b{\beta}
\def\G{\Gamma}
\def\g{\gamma}
\def\zt{{\Bbb Z}_2}
\def\zth{{\bold Z}_2^\wedge}
\def\bs{{\bold s}}
\def\bx{{\bold x}}
\def\bof{{\bold f}}
\def\bq{{\bold Q}}
\def\be{{\bold e}}
\def\lline{\rule{.6in}{.6pt}}
\def\xb{{\overline x}}
\def\xbar{{\overline x}}
\def\ybar{{\overline y}}
\def\zbar{{\overline z}}
\def\ebar{{\overline \be}}
\def\nbar{{\overline n}}
\def\ubar{{\overline u}}
\def\bbar{{\overline b}}
\def\et{{\widetilde e}}
\def\M{\mathcal{M}}
\def\lf{\lfloor}
\def\rf{\rfloor}
\def\ni{\noindent}
\def\ms{\medskip}
\def\Dhat{{\widehat D}}
\def\what{{\widehat w}}
\def\Yhat{{\widehat Y}}
\def\abar{{\overline{a}}}
\def\minp{\min\nolimits'}
\def\sb{{$\ssize\bullet$}}
\def\mul{\on{mul}}
\def\N{{\Bbb N}}
\def\Z{{\Bbb Z}}
\def\S{\Sigma}
\def\Q{{\Bbb Q}}
\def\R{{\Bbb R}}
\def\C{{\Bbb C}}
\def\Xb{\overline{X}}
\def\eb{\overline{e}}
\def\notint{\cancel\cap}
\def\cS{\mathcal S}
\def\cR{\mathcal R}
\def\el{\ell}
\def\TC{\on{TC}}
\def\GC{\on{GC}}
\def\wgt{\on{wgt}}
\def\Ht{\widetilde{H}}
\def\wbar{\overline w}
\def\dstyle{\displaystyle}
\def\Sq{\on{sq}}
\def\Om{\Omega}
\def\ds{\dstyle}
\def\tz{tikzpicture}
\def\zcl{\on{zcl}}
\def\bd{\bold{d}}
\def\cM{\mathcal{M}}
\def\io{\iota}
\def\Vb#1{{\overline{V_{#1}}}}
\def\Ebar{\overline{E}}
\def\lb{\,\begin{picture}(-1,1)(1,-1)\circle*{4.5}\end{picture}\ }
\def\lbb{\,\begin{picture}(-1,1)(1,-1)\circle*{8}\end{picture}\ }
\def\zp{\Z_p}
\def\bL{\mathbf{L}}
\def\st{1.732}

\title
{Geodesic complexity of a cube}
\author{Donald M. Davis}
\address{Department of Mathematics, Lehigh University\\Bethlehem, PA 18015, USA}
\email{dmd1@lehigh.edu}

\date{August 8, 2023}
\keywords{Geodesic complexity, topological robotics, geodesics, cut locus, cube}
\thanks {2000 {\it Mathematics Subject Classification}: 53C22, 52B10, 55M30.}

\maketitle
\begin{abstract} The topological (resp.~geodesic) complexity of a topological (resp.~metric) space is roughly the smallest number of continuous rules required to choose paths (resp.~shortest paths) between any points of the space. We prove that the geodesic complexity of a cube exceeds its topological complexity by exactly 2. The proof involves a careful analysis of cut loci of the cube.\end{abstract}

\section{Introduction}\label{introsec}
In \cite{Far}, Farber introduced the concept of the {\it topological complexity}, $\TC(X)$, of a topological space $X$, which is the minimal number $k$ such that there is a partition
$$X\times X=E_1\sqcup \cdots\sqcup E_k$$ 
with each $E_i$ being locally compact and admitting a continuous function $\phi_i:E_i\to P(X)$
such that $\phi_i(x_0,x_1)$ is a path from $x_0$ to 
$x_1$.
Here $P(X)$ is the space of paths in $X$ with the compact-open topology, and each $\phi_i$ is called a motion-planning rule. If $X$ is the space of configurations of one or more robots, this models the number of continuous rules required to program the robots to move between any two configurations. 

In \cite{david}, Recio-Mitter suggested that if $X$ is a metric space, then we require that the paths $\phi_i(x_0,x_1)$ be minimal geodesics (shortest paths) from $x_0$ to $x_1$, and defined the {\it geodesic complexity}, $\GC(X)$, to be the smallest number $k$ such that there is a partition 
$$X\times X= E_1\sqcup\cdots\sqcup E_k$$ 
with each $E_i$ being locally compact and admitting a continuous function $\phi_i:E_i\to P(X)$ such that $\phi_i(x_0,x_1)$ is a minimal geodesic from $x_0$ to $x_1$.\footnote{Recio-Mitter's  definition of $\GC(X)=k$ involved partitions into  sets $E_0,\ldots,E_k$, which, for technical reasons, has become the more common definition of concepts of this sort, but we prefer here to stick with Farber's more intuitive formulation.} Each function $\phi_i$ is called a {\it geodesic motion-planning rule} (GMPR).

One example discussed by Recio-Mitter in \cite{david} was when $X$ is (the surface of) a cube. It is well-known that here $\TC(X)=\TC(S^2)=3$, and he showed that $\GC(X)\ge4$. In this paper we prove that in this  case $\GC(X)=5$.

\begin{thm} \label{mainthm}If $X$ is a cube, then $\GC(X)=5$.
\end{thm}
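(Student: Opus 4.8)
The proof divides into the two inequalities $\GC(X)\le 5$ and $\GC(X)\ge 5$. Since $\GC(X)\ge\TC(X)=3$ is automatic and $\GC(X)\ge 4$ is due to Recio-Mitter, the genuinely new work is to push the lower bound up to $5$ and to exhibit a matching family of five geodesic motion-planning rules. Both halves rest on a detailed description, carried out in the sections below, of the cut locus $\on{Cut}(x)$ of an arbitrary point $x$ of the cube: the combinatorial type of the tree $\on{Cut}(x)$, the positions of its vertices, and how all of this varies as $x$ moves. Throughout I regard a GMPR as a continuous section, over a locally compact $E_i\subseteq X\times X$, of the projection $\pi\colon E\to X\times X$ from $E=\{(x,y,\gamma):\gamma\text{ a minimal geodesic from }x\text{ to }y\}$.

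For the upper bound I would stratify $X\times X$ and build the rules piece by piece. Let $A$ be the set of pairs joined by a \emph{unique} minimal geodesic; that geodesic varies continuously over $A$, so $A$ (enlarged slightly to a locally compact set) carries one GMPR. Its complement is the total cut locus $\Sigma=\{(x,y):x,y\text{ joined by}\ \ge 2\text{ minimal geodesics}\}$, which is only three-dimensional: two dimensions of freedom for $x$, one for the position of $y$ on the one-dimensional set $\on{Cut}(x)$. Over $\Sigma$ the projection $\pi$ is a finite branched covering, and I would filter $\Sigma$ by the local type of $y$ within $\on{Cut}(x)$ — interior of an edge of the tree, an ordinary trivalent vertex, a vertex of the cube, or one of the finitely many exceptional configurations where $\on{Cut}(x)$ changes combinatorial type — checking, with the structural analysis, that over each stratum the competing minimal geodesics admit continuous local selections. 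A count of the usual $\cat(Y)\le\dim Y+1$ type then covers the three-dimensional $\Sigma$ by four locally compact pieces carrying GMPRs, for a total of $1+4=5$. This construction must keep every ``wall'' of $\Sigma$ out of the piece $A$, since the unique-geodesic rule cannot be continued across a wall: the two minimal geodesics that agree in length there are distinct \emph{as paths}.

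For the lower bound I would localize the obstruction at the most degenerate configuration uncovered by the cut-locus analysis — a natural candidate is a pair $(x_0,y_0)$ of antipodal vertices of the cube, joined by six minimal geodesics and about which $\on{Cut}(x_0)$ is maximally branched (a face-center/opposite-face-center pair, joined by four, is an alternative) — and prove that the restriction of $\pi$ over \emph{every} neighborhood $\mathcal N$ of $(x_0,y_0)$ already requires five pieces, whence $\GC(X)\ge 5$. Concretely I would develop the cube along each competing geodesic, determine exactly which of them remain minimal for every $(x,y)\in\mathcal N$ — this yields a stratification of $\mathcal N$ together with the precise ``switching pattern'' of the geodesics across the walls of $\Sigma\cap\mathcal N$ — and then show that no four locally compact subsets of $\mathcal N$ support continuous selections compatible with that pattern.

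The hard part is making this last step deliver a genuine $5$ rather than a reprise of $\ge 4$. The structural input is again that the unique-geodesic rule cannot be carried across a wall of $\Sigma$, so that, very roughly, one of the four available pieces is spent on the generic region and only three remain for the three-dimensional set $\Sigma\cap\mathcal N$; one must then show that three do not suffice there, i.e.\ that the branched covering $\pi|_\Sigma$ near $(x_0,y_0)$ has no more economical system of sections than the full dimensional quota of four. I expect to establish this by a hands-on point-set argument, exploiting that a locally compact subset of $X\times X$ is open in its closure, together with the explicit monodromy of the six (or four) minimal geodesics around small loops and linking $2$-spheres in $\mathcal N$. Pinning down that monodromy — which requires control of the minimal geodesics over a full three-dimensional neighborhood of $(x_0,y_0)$, not merely along $\on{Cut}(x_0)$ — is where the main effort lies, and it is why the cut-locus analysis must be pushed well past the single-point picture.
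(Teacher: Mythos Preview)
Your outline names the right targets in each half, but in both places the step that does the real work is left as a heuristic, and the heuristic is not valid.

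For the upper bound, the appeal to ``$\dim\Sigma=3\Rightarrow$ four pieces'' is not an argument for geodesic complexity: what must be shown is that over each piece the branched cover $\pi$ admits a continuous section, and that is not governed by dimension. In the paper the three-dimensional multiplicity-$2$ stratum is handled by a \emph{single} GMPR (a global orientation of the cube picks a preferred side of every cut-locus edge), whereas the merely two-dimensional stratum of degree-$3$ and degree-$4$ vertices of $\on{Cut}(x)$ needs \emph{two} GMPRs---there is no continuous way to choose one of the three or four approach-regions to such a vertex as $x$ varies, but after setting aside the $56$ pairs where the vertex has degree $5$ or $6$ (a discrete $E_3$) one can do it with two. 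So the ``$4$'' on $\Sigma$ is $1+1+2$ by vertex degree, not $\dim\Sigma+1$, and building the last two GMPRs is most of the work. (Your stratification also misplaces the cube corners: a corner at a leaf of $\on{Cut}(x)$ is not in the cut locus at all and already lies in the unique-geodesic set.)

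For the lower bound your monodromy/linking-sphere plan is never executed, and as stated it gives no mechanism that forces $5$ rather than Recio-Mitter's $4$. The paper's argument is a concrete five-deep nested-sequence construction, and the geometric input you are missing is this: along the curve $DE$ in a face approaching the corner $V_8$, the top half of $\on{Cut}(P)$ has a degree-$4$ vertex $Q(P)$ which limits to the opposite corner $V_2$ (degree $6$) as $P\to V_8$, and as $P$ moves off $DE$ into the adjacent open regions $D$ or $E$ that degree-$4$ vertex splits into two degree-$3$ vertices. Starting from $(V_8,V_2)\in E_1$, one takes $(P_n,Q_n)$ on $DE$ forced into a new $E_2$; then $(P_{n,m},Q_{n,m})$ in $D$ or $E$ with $Q_{n,m}$ the appropriate degree-$3$ vertex, forced into $E_3$; then points $Q_{n,m,\ell}$ on a nearby cut-locus edge, forced into $E_4$; and finally points just off the cut locus that cannot lie in any of $E_1,\dots,E_4$. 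The degree-$4\to$ two degree-$3$ splitting is precisely what buys the extra level beyond $\ge 4$; a general monodromy argument that does not isolate a phenomenon of this specificity will not reach $5$.
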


For comparison, in \cite{tet} the author proved that for a regular tetrahedron $T$, $\GC(T)=4$ or 5, but was not able to establish the precise value. Here again $\TC(T)=\TC(S^2)=3$.

Our work relies heavily on the work of the author and Guo in \cite{DG}, where they analyzed the isomorphism classes as labeled graphs of cut loci on the cube. In Section \ref{background}, we review the relevant parts of that work.
In Section \ref{GMPR}, we prove that $\GC(X)\le5$ by constructing five explicit geodesic motion planning rules.
In Section \ref{lower}, we prove $\GC(X)\ge5$, using methods similar to those used in \cite{david} and \cite{tet}.

\section{Background on cut loci of a cube}\label{background}
In this section we present background material, mostly from \cite{DG}, regarding cut loci for a cube.

The {\it cut locus} of a point $P$ on a polyhedron is the closure of the set of points $Q$ such that there is more than one shortest path (minimal geodesic) from $P$ to $Q$. The cut locus is a labeled graph with corner points of the polyhedron labeling the leaves and perhaps  other vertices. Two labeled graphs are isomorphic if there is a graph bijection between them preserving labels. We let $\bL$ denote the isomorphism class of a cut locus.

Figure \ref{figA}, from \cite{DG}, shows the partition of a face of a cube into 193 connected subsets with constant $\bL$. Figure \ref{figB}, also from \cite{DG}, is a reparametrized version of the regions in the left quadrant of Figure \ref{figA}.

\bigskip

\begin{minipage}{6in}

\bigskip
\begin{center}

    \includegraphics[scale=0.22]{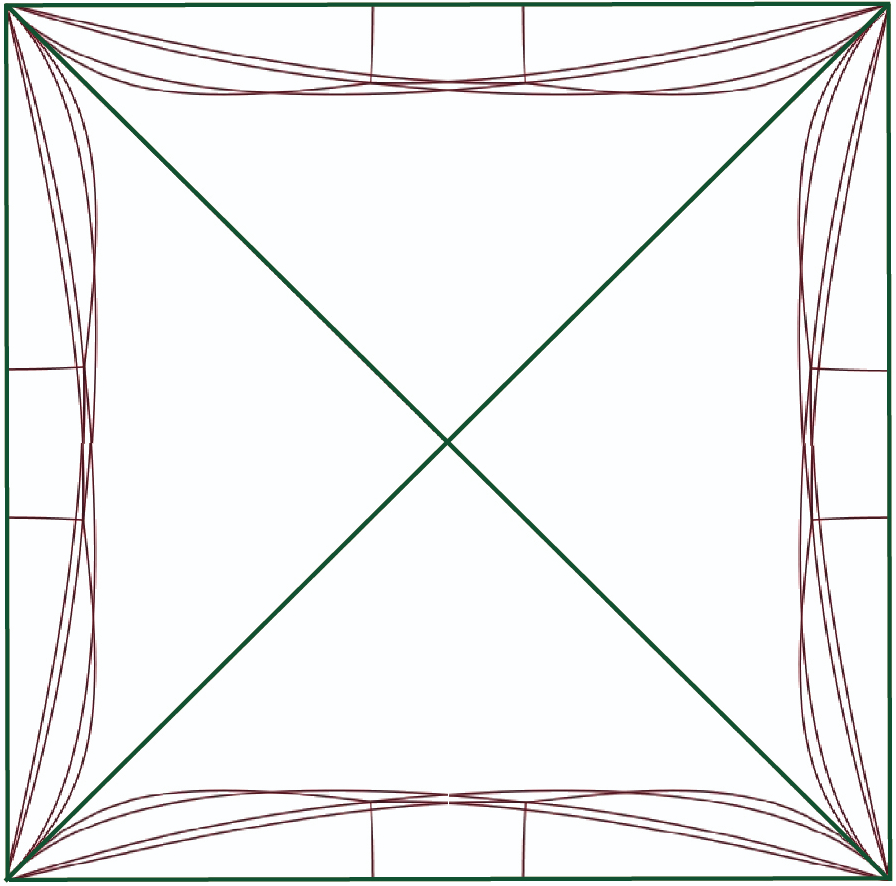}
   
   \begin{fig} \label{figA}
   
 {\bf Decomposition of a face into subsets on which $\textbf{L}$ is constant}
\\

\end{fig}
\end{center}
\end{minipage}
\\
\bigskip

\begin{minipage}{6in}

   \begin{center}
   
    \includegraphics[scale=0.09]{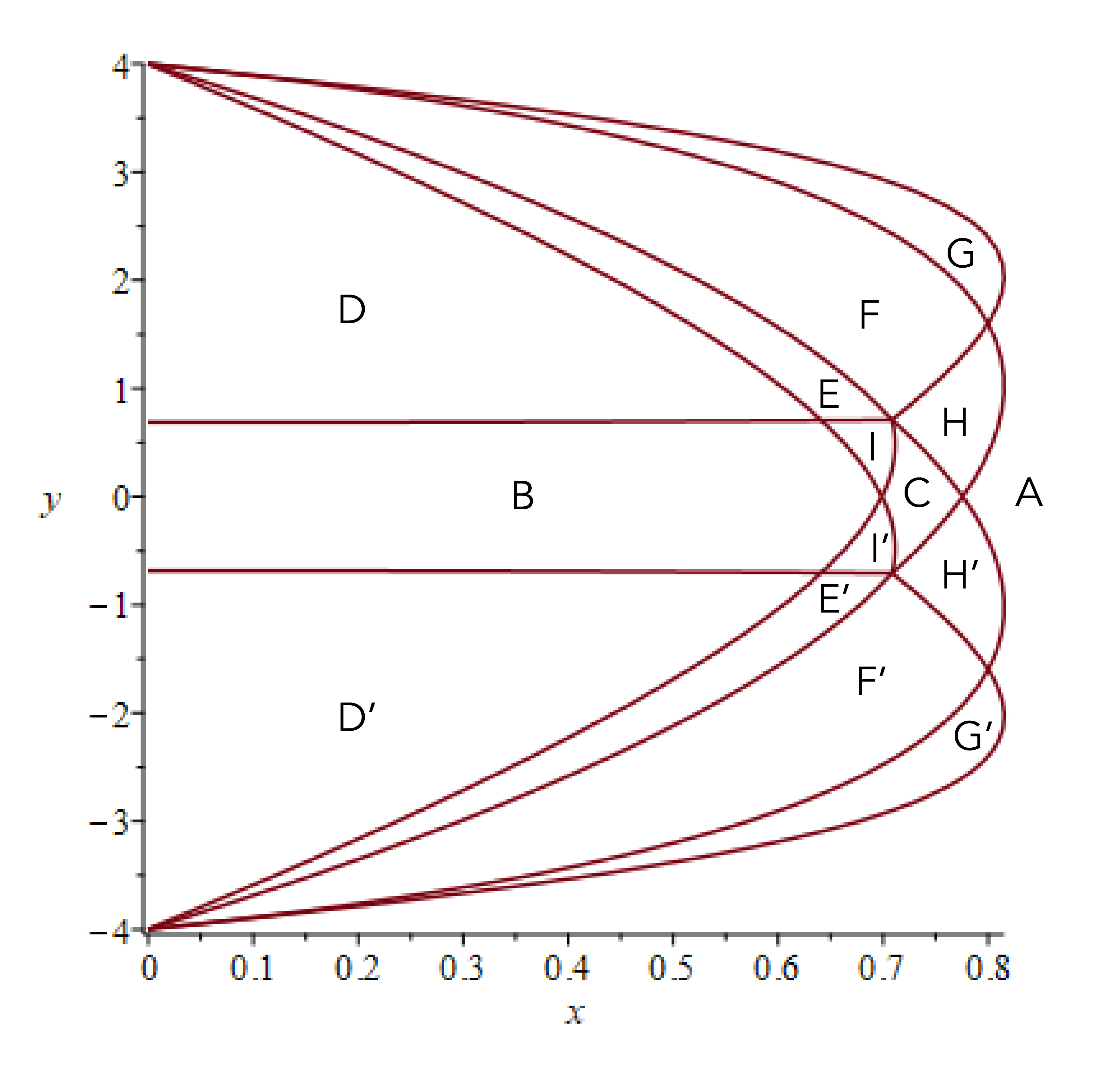}

    \begin{fig}\label{figB}

{\bf Regions in left quadrant of Figure \ref{figA}}
    
    \end{fig}
    
\end{center}
\end{minipage}

\bigskip
In \cite{DG} we listed, in stylized form, the $\bL$ for the various regions, but here, as we are interested in continuity of motion-planning rules, we are concerned about other aspects, such as the placement of edges of the cut locus with respect to one another.

The cut loci are found by the method of star unfolding and Voronoi diagrams, as developed in \cite{star97} and \cite{cut21}. We will use the same numbering of the corner points of the cube as was used in \cite{DG} and appears in Figure \ref{cube}, also taken from \cite{DG}, which, for future reference, includes an example of the cut locus of the midpoint of  edge 5-8.

\bigskip
\begin{minipage}{6in}

\bigskip
\begin{center}

    \includegraphics[scale=0.18]{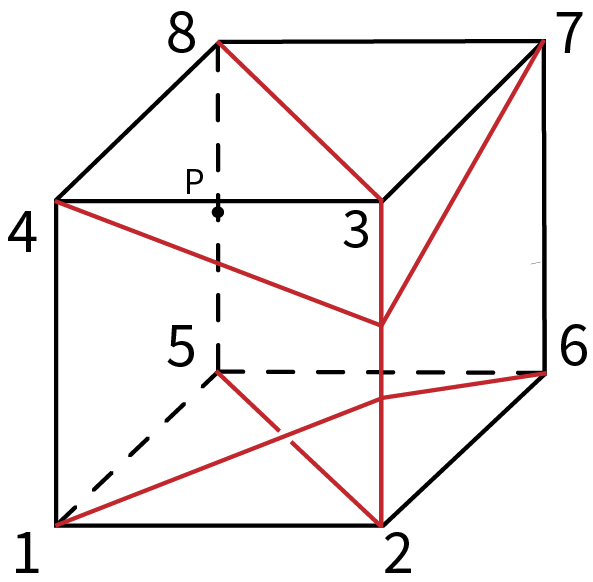}
   
   \begin{fig} \label{cube}
   
 {\bf A cube with labeled corner points, and the cut locus for the middle point of an edge highlighted}
\\

\end{fig}
\end{center}
\end{minipage}

\bigskip

In \cite{DG}, we explain how the diagram on the right side of Figure \ref{cutloc} is obtained, depicting in bold red the cut locus of the point $P$ in the left side of Figure \ref{cutloc}. The numbers at half of the vertices of the polygon correspond to the corner points in Figure \ref{cube}, and the labels $P_1,\ldots,P_8$ at the other vertices are different positions of the point $P$ in an unfolding of the cube. Every point of the cube occurs exactly once inside or on the 16-gon in Figure \ref{cutloc}, except that some occur on two boundary segments, and $P$ occurs eight times.

\begin{minipage}{6in}

\begin{center}

    \begin{tikzpicture}[scale=0.7]
    \draw[gray, thick] (1,1) -- (-1,1);
    \draw[gray, thick] (-1,1) -- (-1,-1);
    \draw[gray, thick] (1,-1) -- (-1,-1);
    \draw[gray, thick] (1,1) -- (1,-1);
    \draw (-9,-1) -- (-7,-1) -- (-7,1) -- (-9,1) -- (-9,-1);
     \draw[gray, thick] (-1,1) -- (-2.25,3.25);
     \draw[gray, thick] (-1,1) -- (-3.25,-0.25);
     \draw[gray, thick] (-3.25,-0.25)--(-1,-1);
     \draw[gray, thick] (-1,-1)--(-1.75,-3.25);
      \draw[gray, thick] (-2.25,3.25)--(-1,3);
      \draw[gray, thick] (-1,3)--(-0.75,4.25);
      \draw[gray, thick] (-0.75,4.25)--(1,1);
      \draw [gray, thick,dash dot] (-1,-3) -- (-1,-1);
      \draw [gray, thick,dash dot] (-1,3) -- (-1,1);
       \draw[gray, thick] (-1.75,-3.25)--(-1,-3);
      \draw[gray, thick] (-1,-3)--(-0.75,-3.75);
      \draw[gray, thick] (-0.75,-3.75)--(1,-1);
      \draw[gray, thick] (1,-1)--(3.75,-2.75);
       \draw[gray, thick] (3.75,-2.75)--(3,-1);
       \draw[gray, thick] (3,-1)--(4.75,-0.25);
       \draw[gray, thick] (3,1)--(4.75,-0.25);
       \draw [gray, thick,dash dot] (1,1) -- (3,1);
       \draw [gray, thick,dash dot] (1,-1) -- (3,-1);
       \draw[gray, thick] (3,1)--(4.25,2.75);
       \draw[gray, thick] (4.25,2.75)--(1,1);
       \coordinate[label={$1$}]  (1) at (-1.5,0.75);
       \coordinate[label={$2$}]  (2) at (1.1,1.1);
       \coordinate[label={$3$}]  (2) at (1,-1.75);
       \coordinate[label={$4$}]  (3) at (-1.5,-1.5);
       \coordinate[label={$5$}]  (5) at (-1.25,3.0);
       \coordinate[label={$6$}]  (6) at (3.5,0.75);
       \coordinate[label={$7$}]  (7) at (3.5,-1.5);
       \coordinate[label={$8$}]  (8) at (-1.25,-3.75);
       \coordinate[label={$P_1$}]  (p_1) at (-3.75,-0.75);
       \coordinate[label={$P_2$}]  (p_2) at (-2.75,3);
       \coordinate[label={$P_3$}]  (p_3) at (-0.75,4.15);
       \coordinate[label={$P_4$}]  (p_4) at (4.5,2.75);
        \coordinate[label={$P_5$}]  (p_5) at (5.25,-0.75);
         \coordinate[label={$P_6$}]  (p_6) at (4,-3.5);
          \coordinate[label={$P_7$}]  (p_7) at (-0.75,-4.75);
           \coordinate[label={$P_8$}]  (p_3) at (-2.25,-3.5);
            \draw[red, very thick](0.647,.529)--(-1,1);
         \draw[red, very thick](0.647,0.529)--(-1,3);
         \draw[red, very thick](0.333,-0.333)--(-1,-3);
         \draw[red, very thick](0.333,-0.333)--(-1,-1);
         \draw[red, very thick](0.895,0.649)--(1,1);
          \draw[red, very thick](0.895,0.649)--(3,1);
          \draw[red, very thick](0.805,-0.122)--(3,-1);
          \draw[red, very thick](0.805,-0.122)--(1,-1);
          \draw[red, very thick](0.805,-0.122)--(0.75,-0.0357);
          \draw[red, very thick](0.333,-0.333)--(0.75,-0.0357);
          \draw[red, very thick](0.647,0.529)--(0.75,0.4722);
           \draw[red, very thick](0.75,-0.0357)--(0.75,0.4722);
           \draw[red, very thick](0.895,0.649)--(0.75,0.4722);
           \node at (.6,.2) {$I$};
           \node at (-9.15,-1) {$5$};
           \node at (-6.85,-1) {$6$};
           \node at (-6.85,1) {$7$};
           \node at (-9.15,1) {$8$};
           \filldraw (-8.75,0.25) circle[radius=0.8pt];
           \node at (-8.5,.25) {$P$};
\end{tikzpicture}
\end{center}

\begin{fig} \label{cutloc}
{\bf Voronoi cells and cut locus of $P$}
\end{fig}
\end{minipage}
\bigskip

For example, the region in the right side of the 16-gon in Figure \ref{cutloc} bounded above and below by the segments coming in from the vertices labeled 6 and 7, on the right by $P_5$, and on the left by the short vertical segment $I$ is all the points that are closer to the $P_5$ version of point $P$ than to the others. This is called the Voronoi cell of $P_5$. The segment $I$ is equally close to versions $P_1$ and $P_5$. There are two equal minimal geodesics from $P$ to points on $I$; one crosses the segment connecting corner points 1 and 4, while the other crosses the segment connecting 6 and 7.

It is proved in \cite{DG} that the top and bottom halves of cut loci of the cube can be considered separately. Although all the regions in Figure \ref{figB} have distinct $\bL$, some have isomorphic top halves. For example, as can be seen in \cite[Figure 2.2]{DG}, regions $F$, $E$, $I$, $C$, and $H$ all have isomorphic top halves. We combine these here into a single region, which we will also call $F$. Similarly regions $D$, $B$, and $I'$ in Figure \ref{figB} have the same top half of $\bL$ and are combined into a single region, $D$. Also $D'$ and $E'$ combine to form $D'$, $F'$ and $G'$ combine to $F'$, and $A$, $G$, and $H'$ combine into $A$. This simplifies Figure \ref{figB} into our schematic Figure \ref{DF}, which only concerns top halves of $\bL$. We will discuss bottom halves later in this section.

\bigskip
\begin{minipage}{6in}

\begin{center}

    \begin{tikzpicture}[scale=.7]
\draw (0,0) -- (10,5) -- (0,10) -- (0,0);
\draw (0,4) -- (4,4);
\draw (0,0) to[out=30,in=270] (4,4);
\draw (0,0) to[out=30,in=325] (4,4);
\draw (4,4) to[out=40,in=-30] (0,10);
\draw (4,4) to[out=110,in=-30] (0,10);
\node at (7,5) {$A$};
\node at (1.5,6) {$D$};
\node at (1.5,3) {$D'$};
\node at (3.8,6) {$F$};
\node at (4.1,3) {$F'$};
\node at (4.2,4) {$*$};
\node at (-.3,5) {$\cE$};
\end{tikzpicture}
\end{center}

\begin{fig} \label{DF}
{\bf Regions with same top half of cut locus}
\end{fig}
\end{minipage}
\bigskip

There are also curves $DF$, $FA$, $DD'$, $D'F'$, and $F'A$ bounding these combined regions. There is also $*$, the intersection point, and the left edge $\cE$. In Figure \ref{big1}, we depict the top half of the cut loci for these regions, with arrows indicating convergence of points in a region to points in its boundary, in each of which an edge of the graph is collapsed.

\bigskip
\begin{minipage}{6in}

\begin{center}

    \begin{tikzpicture}[scale=.6]
\draw (1,11) -- (1,13) -- (2,14);
\draw (7,11) -- (7,14);
\draw (14,-3) -- (14,-2.3) -- (12,0);
\draw (13,11) -- (13,11.7) -- (15,14);
\draw (20,11) -- (20,14);
\node at (4,12.5) {$\to$};
\node at (4,5.5) {$\leftarrow$};
\node at (10.3,12.5) {$\leftarrow$};
\node at (10.3,5.5) {$\to$};
\node at (17,12.5) {$\to$};
\node at (16,5.5) {$\leftarrow$};
\node at (4,-1.5) {$\to$};
\node at (10.3,-1.5) {$\leftarrow$};
\node at (17,-1.5) {$\to$};
\node at (1,10.3) {$D$};
\node at (7,10.3) {$DF$};
\node at (13,10.3) {$F$};
\node at (20,10.3) {$FA$};
\draw (1,11.8) -- (0,11.8);
\draw (1,12.5) -- (2,12.5);
\draw (1,13) -- (0,14);
\node at (-.15,11.8) {$1$};
\node at (2.15,12.5) {$6$};
\node at (2,14.15) {$2$};
\node at (0,14.15) {$5$};
\draw (7,12) -- (6,12);
\draw (6,13.2) -- (7,13) -- (8,13.2);
\node at (5.8,12) {$1$};
\node at (5.8,13.2) {$5$};
\node at (8.2,13.2) {$6$};
\node at (7,14.2) {$2$};
\draw (14,-2.3) -- (15,-2.3);
\draw (13.3,-1.5) -- (14.3,-1);
\draw (12.6,-.7) -- (13.2,0);
\node at (15.2,-2.3) {$6$};
\node at (14.5,-1) {$2$};
\node at (13.3,.1) {$5$};
\node at (12,.2) {$1$};
\draw (19,12) -- (20,12) -- (19.3,12.8);
\draw (20,13) -- (21,13);
\node at (18.8,12) {$1$};
\node at (19.1,12.8) {$5$};
\node at (21.2,13) {$6$};
\node at (20,14.2) {$2$};
\draw (1,4) -- (1,6) -- (0,7);
\draw (0,5) -- (2,5);
\node at (1,3.3) {$\cE$};
\node at (-.2,5) {$1$};
\node at (2.2,5) {$6$};
\node at (0,7.2) {$5$};
\node at (1,8.8) {$\downarrow$};
 \filldraw (1,6) circle[radius=2pt];
 \node at (3.5,8.8) {$\searrow$};
 \node at (1.3,6) {$2$};
 \node at (7,3.3) {$DD'$};
 \node at (13,3.3) {$*$};
 \node at (20,3.3) {$A$};
 \draw (7,4) -- (7,6) -- (8,7);
 \draw (7,6) -- (6,7);
 \draw (6,5) -- (8,5);
 \node at (5.75,5) {$1$};
 \node at (8.25,5) {$6$};
 \node at (6,7.25) {$5$};
 \node at (8,7.25) {$2$};
 \draw (13,4) -- (13,5.5) -- (14,5.5);
 \draw (12,5.5) -- (13,5.5) -- (12,7);
 \draw (13,5.5) -- (14,7);
 \node at (11.75,5.5) {$1$};
 \node at (14.25,5.5) {$6$};
 \node at (12,7.25) {$5$};
 \node at (14,7.25) {$2$};
 \draw (20,4) -- (20,5) -- (21,6) -- (21,7);
 \draw (21,6) -- (22,6);
 \draw (20,5) -- (19,6) -- (19,7);
 \draw (19,6) -- (18,6);
 \node at (9.5,8.8) {$\searrow$};
 \node at (13,8.8) {$\downarrow$};
 \node at (20,8.8) {$\uparrow$};
 \node at (22.25,6) {$6$};
 \node at (17.75,6) {$1$};
 \node at (21,7.25) {$2$};
 \node at (19,7.25) {$5$};
 \node at (1,-3.7) {$D'$};
 \node at (7,-3.7) {$D'F'$};
 \node at (14,-3.7) {$F'$};
 \node at (20,-3.7) {$F'A$};
 \node at (9.5,1.8) {$\nearrow$};
 \node at (1,1.8) {$\uparrow$};
 \node at (13,1.8) {$\uparrow$};
 \node at (20,1.8) {$\downarrow$};
 \node at (4,1.8) {$\nearrow$};
 \draw (1,-3) -- (1,-1) -- (2,0);
 \draw (1,-1) -- (0,0);
 \draw (1,-2.3) -- (2,-2.3);
 \draw (1,-1.6) -- (0,-1.6);
 \node at (2.25,-2.3) {$6$};
 \node at (-.25,-1.6) {$1$};
 \node at (0,.25) {$5$};
 \node at (2,.25) {$2$};
 \draw (7,-3) -- (7,0);
 \draw (7,-2) -- (8,-2);
 \draw (6,-.8) -- (7,-1) -- (8,-.8);
 \node at (8.25,-2) {$6$};
 \node at (5.75,-.8) {$1$};
 \node at (8.25,-.8) {$2$};
 \node at (7,.25) {$5$};
 \draw (20,-3) -- (20,0);
 \draw (21,-2) -- (20,-2) -- (21,-1);
 \draw (20,-1) -- (19,-1);
 \node at (21.25,-2) {$6$};
 \node at (21.25,-1) {$2$};
 \node at (18.75,-1) {$1$};
 \node at (20,.25) {$5$};
  \node at (20,1.8) {$\downarrow$};
 \node at (4,1.8) {$\nearrow$};
 \node at (16.5,1.8) {$\nwarrow$};
 \node at (16.5,8.8) {$\swarrow$};
 \draw [->] (5.3,9.4) -- (8.5,8.2);
 \draw [->] (5.3,1.2) -- (8.5,2.4);\draw (13,11.7) -- (12,11.7);
\draw (13.7,12.5) -- (12.7,13);
\draw (14.4,13.3) -- (13.8,14);
\node at (11.8,11.7) {$1$};
\node at (12.5,13) {$5$};
\node at (13.7,14.1) {$2$};
\node at (15,14.2) {$6$};
\end{tikzpicture}
\end{center}

\begin{fig} \label{big1}
{\bf Top halves of cut loci}
\end{fig}
\end{minipage}
\bigskip

The bottom half of the cut locus of a point in a region $R$ in Figure \ref{figB} is obtained from the top half of the cut locus of the vertical reflection of the point, which is in reflected region $R'$, by inverting it and applying the permutation $(1\ 4)(2\ 3)(5\ 8)(6\ 7)$ to the labels. The collecting of several regions of Figure \ref{figB} into a single region  with the same bottom half of $\bL$ is essentially a vertical flip of what was done in forming Figure \ref{DF} for top halves. For example, the vertical reflection of the region $D'$ of Figure \ref{DF} contains regions $D$ and $E$ of Figure \ref{figB}, and its cut locus bottom is as in Figure \ref{bot}.

\bigskip
\begin{minipage}{6in}

\begin{center}

    \begin{tikzpicture}[scale=.7]
\draw (0,1) -- (0,4);
\draw (-1,0) -- (0,1) -- (1,0);
\draw (0,3) -- (1,3);
\draw (0,2) -- (-1,2);
\node at (-1.25,2) {$4$};
\node at (1.25,3) {$7$};
\node at (-1,-.25) {$8$};
\node at (1,-.25) {$3$};
\end{tikzpicture}
\end{center}

\begin{fig} \label{bot}
{\bf Cut locus bottom of flip of  region $D'$ of Figure \ref{DF}}
\end{fig}
\end{minipage}
\bigskip

Each region in the top quadrant of Figure \ref{figA} is obtained from the corresponding region in the left quadrant by a clockwise rotation of $\pi/2$ around the center of the square. The cut locus of the new region is obtained from that of the old one by applying the permutation $(1\ 4\ 3\ 2)(5\ 8\ 7\ 6)$ to the labels and then rotating the resulting figure $\pi/2$ counter-clockwise. In Figure \ref{AA} we show the cut locus of points in region $A$, in the rotated region $A_R$, and in the half-diagonal separating them.

\bigskip
\begin{minipage}{6in}

\begin{center}

    \begin{tikzpicture}[scale=.5]
\draw (0,0) -- (5,0) -- (5,5) -- (0,5) -- (0,0) -- (5,5);
\draw (5,0) -- (0,5);
\draw (0,0) to[out=70,in=-70] (0,5);
\draw (0,5) to[out=-20,in=200] (5,5);
\node at (1,2.5) {$A$};
\node at (2.5,3.7) {$A_R$};
\node at (-.3,0) {$6$};
\node at (5.3,0) {$5$};
\node at (5.3,5) {$7$};
\node at (-.3,5) {$8$};
\draw (9,0) -- (9,1) -- (10,2) -- (11,1) -- (11,0);
\draw (8,1) -- (9,1);
\draw (11,1) -- (12,1);
\draw (10,2) -- (10,3) -- (11,4) -- (12,4);
\draw (11,4) -- (11,5);
\draw (10,3) -- (9,4) -- (9,5);
\draw (9,4) -- (8,4);
\node at (9,-.3) {$8$};
\node at (11,-.3) {$3$};
\node at (10,-1) {$A$};
\node at (7.7,1) {$4$};
\node at (12.3,1) {$7$};
\node at (12.3,4) {$6$};
\node at (7.7,4) {$1$};
\node at (9,5.3) {$5$};
\node at (11,5.3) {$2$};
\draw (15,0) -- (19,5);
\draw (19,0) -- (15,5);
\draw (18,1.25) -- (17,0);
\draw (16,3.75) -- (15,3);
 \filldraw (16,1.25) circle[radius=2pt];
  \filldraw (18,3.75) circle[radius=2pt];
\node at (15,-.3) {$8$};
\node at (17,-.3) {$3$};
\node at (19,-.3) {$7$};
\node at (17,-1){diag};
\node at (15.6,1.25) {$4$};
\node at (17.6,3.75) {$2$};
\node at (19,5.3) {$6$};
\node at (15,5.3) {$5$};
\node at (14.7,3) {$1$};
\draw (22,1) -- (23,1) -- (24,2) --  (25,2) -- (26,1) -- (27,1);
\draw (23,0) -- (23,1);
\draw (26,0) -- (26,1);
\draw (24,2) -- (23,3) -- (22,3);
\draw (23,3) -- (23,4);
\draw (25,2) -- (26,3) -- (27,3);
\draw (26,3) -- (26,4);
\node at (23,-.3) {$4$};
\node at (26,-.3) {$3$};
\node at (24.5,-1) {$A_R$};
\node at (21.7,1) {$8$};
\node at (27.3,1) {$7$};
\node at (27.3,3) {$2$};
\node at (21.7,3) {$1$};
\node at (23,4.3) {$5$};
\node at (26,4.3) {$6$};
\end{tikzpicture}
\end{center}

\begin{fig} \label{AA}
{\bf Cut locus of rotation of region}
\end{fig}
\end{minipage}
\bigskip

In \cite{DG}, we were only concerned about isomorphism type as a graph, but here we care about the relative positions of the labeled arms.

\section{Geodesic motion planning rules}\label{GMPR}
In this section, we construct five geodesic motion-planning rules for the cube. The remainder of this section is devoted to the proof of the following result.
\begin{thm}\label{thm1} If $X$ is the cube, then $X\times X$ can be partitioned into five locally-compact subsets $E_i$ with a GMPR $\phi_i$ on each.\end{thm}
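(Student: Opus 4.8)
The plan is to exhibit five locally-compact sets $E_1,\dots,E_5$ partitioning $X\times X$ and a continuous choice of minimal geodesic on each. The basic organizing principle is the one already used for $\TC(S^2)=3$, namely that away from the cut locus the shortest path is unique and varies continuously, so the only trouble comes from pairs $(P,Q)$ with $Q$ in the cut locus of $P$, where we must resolve the finitely-many competing geodesics in a continuous way. Concretely, I would first set $E_1=\{(P,Q): Q\notin \operatorname{Cut}(P)\}$, on which $\phi_1$ is the unique shortest path; this is an open (hence locally compact) set and $\phi_1$ is manifestly continuous. The remaining four rules must then cover the ``diagonal-like'' stratum $Z=\{(P,Q):Q\in\operatorname{Cut}(P)\}$, which by the analysis of Section \ref{background} is a stratified set whose local structure is governed by the finitely many isomorphism types $\bL$ and, more importantly here, by the relative cyclic positions of the labeled arms of the cut-locus graph as catalogued in Figures \ref{big1}, \ref{bot}, \ref{AA}.

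The key steps, in order. First I would reduce, using the top/bottom splitting from \cite{DG} recalled above, to understanding geodesics that cross a single edge of the cube (the generic case for points on an arm of the cut locus) versus those crossing two edges (the case of vertices of the cut-locus graph, i.e.\ $Q$ equidistant via three or more routes). Second, on the open arms of the cut locus there are exactly two competing geodesics; I would break the pairs $(P,Q)$ with $Q$ on an arm into two pieces according to a globally consistent rule for choosing between the ``left'' and ``right'' geodesic — e.g.\ using the labels $1,\dots,8$ of the cube corners to order the two edges crossed, always taking the path through the lower-numbered edge — giving sets $E_2$ and $E_3$ and continuous rules there. Third, the genuinely hard part is the vertices of the cut-locus graph, and more subtly the $1$-parameter families where two such vertices collide (the ``$*$'' configuration and the degenerations indicated by arrows in Figure \ref{big1}): here three or four geodesics compete and one cannot split them consistently with only the corner labels, because as $(P,Q)$ moves the cyclic order of the arms changes (this is exactly why $\GC>\TC$). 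I expect to need sets $E_4$ and $E_5$ carved out using the finer geometric data — which pair of arms is ``innermost'', or equivalently a continuous local section of the branched structure — and to verify by going through the regions $A,D,D',F,F'$ and their rotated/reflected copies that two extra rules suffice to handle all collision patterns. Fourth, I would check local compactness of each $E_i$: each is locally closed (an intersection of the closed stratum $Z$, or its complement, with sets cut out by the continuous distance-comparison functions $d(P,\cdot)$ through the various edges), so this is routine.

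The main obstacle is Step 3: organizing the degenerations of the cut-locus graph so that only two rules beyond the three ``expected'' ones are needed, i.e.\ matching the lower bound $\GC(X)=5$ proved in Section \ref{lower}. The delicate point is continuity across the boundaries $DF$, $FA$, $DD'$, $D'F'$, $F'A$ and through the intersection point $*$ in Figure \ref{DF}: one must choose the partition and the selection rules so that, as an edge of the cut-locus graph collapses, the geodesic selected on each side extends continuously to the collapsed configuration. I would handle this by writing the relevant geodesics explicitly in the star-unfolding coordinates of Figure \ref{cutloc}, so that the competing geodesics become honest straight segments in the plane and the selection rules become semialgebraic conditions on the unfolding data; continuity is then a limit computation in these coordinates, carried out region-by-region over the finite atlas of Figures \ref{big1}, \ref{bot}, and \ref{AA}. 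Assembling the five pieces and their rules, and checking they partition $X\times X$, then completes the proof of Theorem \ref{thm1}.
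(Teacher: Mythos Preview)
Your plan has a structural gap that the paper's own lower-bound argument (Section~\ref{lower}) exposes. You allocate exactly two sets ($E_4,E_5$) to cut-locus \emph{vertices}, but near $(V_8,V_2)$ the degenerations force vertices of degrees $6$, $4$, and $3$ into three \emph{distinct} sets. Concretely: $(V_8,V_2)$ itself is a degree-$6$ vertex; points $P_n\to V_8$ along the curve $DE$ carry degree-$4$ vertices $Q_n\to V_2$; and points $P_{n,m}\to P_n$ in region $D$ carry degree-$3$ vertices $Q_{n,m}\to Q_n$. The continuity-plus-diagonal arguments of Section~\ref{lower} show that these three families cannot share any two GMPRs among them. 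Since your $E_2,E_3$ are reserved for multiplicity-$2$ points and your $E_1$ for unique geodesics, all three families must land in $E_4\cup E_5$, which is impossible. (Relatedly, you write that ``three or four geodesics compete'' at vertices; you have overlooked the degree-$5$ vertices at the points $*$ of Figure~\ref{DF} and the degree-$6$ vertices at opposite corners.)

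The paper's key device, which your outline lacks, is that multiplicity-$2$ points can be handled by a \emph{single} rule rather than two: a global orientation of the cube selects one side of every cut-locus edge coherently, and $\phi_2$ takes the geodesic approaching from that side. This frees a third set for vertices. The paper spends it on the $56$ isolated pairs where $Q$ has degree $5$ or $6$ (a discrete set, so any choice of geodesic is continuous), and only then uses two further sets for the degree-$3$ and degree-$4$ vertices --- where the region-by-region bookkeeping you sketch does succeed, precisely because the troublesome point $*$ has already been excised. Your proposed rule for $E_2,E_3$ (``lower-numbered edge'') is also not well-posed as stated, since competing geodesics may each cross zero, one, or two cube edges and the corner labels do not obviously yield a selection that is continuous across quadrant boundaries; but this is secondary to the counting obstruction above.
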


We define $E_1$ to be the set of pairs $(P,Q)$ such that there is a unique minimal geodesic from $P$ to $Q$, and let $\phi_1(P,Q)$ be that path. It is well-known (\cite[Chapter 1, 3.12 Lemma]{BH}) that such a function is continuous. Note that a corner point $V$ at a leaf of the cut locus graph of a point $P$ is not in the cut locus, so these $(P,V)$ are in $E_1$.

We define the {\it multiplicity} of $(P,Q)$ (or of just $Q$ if $P$ is implicit) to be the number of distinct
minimal geodesics from $P$ to $Q$. If $Q$ is on an edge (resp.~is a vertex) of the cut locus graph of $P$, then the multiplicity of $(P,Q)$ equals 2 (resp.~the degree of the vertex).

We define $E_2$ to be the set of all $(P,Q)$ of multiplicity 2. The points $Q$ will, for the most part, be interiors of edges of the cut locus graph. It also includes any degree-2 vertex, such as vertex 2 in the cut locus of $\cE$ in Figure \ref{big1}. The function $\phi_2$ is defined using an orientation of the cube; i.e., a continuous choice of direction of rotation around each point. The cut locus of $P$ varies continuously with $P$, unless $P$ is a corner point. We will deal with the case with $P$ a corner point later. The cut loci of points in a quadrant is a tree consisting of two parts connected by a segment parallel to the edge of the quadrant. See, for example, the cut loci of points in regions $A$ and $A_R$ pictured in Figure \ref{AA}. For a 3-dimensional example, see Figure \ref{cube}.  For points on the diagonals separating quadrants, the connecting ``segment'' consists of a single point. Think of rotating the cut locus around the center of that segment in the direction given by the orientation. We define $\phi_2(P,Q)$ to be the geodesic from $P$ to $Q$ which approaches $Q$ in the direction of the rotation. We will deal with the connecting segments shortly.

In Figures \ref{AA2} and \ref{big2}, we add to Figures \ref{AA} and \ref{big1} red dots on the edges of several cut loci indicating the side from which $Q$ should be approached if the orientation is clockwise.

\bigskip
\begin{minipage}{6in}

\begin{center}

    \begin{tikzpicture}[scale=.5]
\draw (0,0) -- (5,0) -- (5,5) -- (0,5) -- (0,0) -- (5,5);
\draw (5,0) -- (0,5);
\draw (0,0) to[out=70,in=-70] (0,5);
\draw (0,5) to[out=-20,in=200] (5,5);
\node at (1,2.5) {$A$};
\node at (2.5,3.7) {$A_R$};
\node at (-.3,0) {$6$};
\node at (5.3,0) {$5$};
\node at (5.3,5) {$7$};
\node at (-.3,5) {$8$};
\draw (9,0) -- (9,1) -- (10,2) -- (11,1) -- (11,0);
\draw (8,1) -- (9,1);
\draw (11,1) -- (12,1);
\draw (10,2) -- (10,3) -- (11,4) -- (12,4);
\draw (11,4) -- (11,5);
\draw (10,3) -- (9,4) -- (9,5);
\draw (9,4) -- (8,4);
\node at (9,-.3) {$8$};
\node at (11,-.3) {$3$};
\node at (10,-1) {$A$};
\node at (7.7,1) {$4$};
\node at (12.3,1) {$7$};
\node at (12.3,4) {$6$};
\node at (7.7,4) {$1$};
\node at (9,5.3) {$5$};
\node at (11,5.3) {$2$};
\draw (15,0) -- (19,5);
\draw (19,0) -- (15,5);
\draw (18,1.25) -- (17,0);
\draw (16,3.75) -- (15,3);
 \filldraw (16,1.25) circle[radius=2pt];
  \filldraw (18,3.75) circle[radius=2pt];
\node at (15,-.3) {$8$};
\node at (17,-.3) {$3$};
\node at (19,-.3) {$7$};
\node at (17,-1){diag};
\node at (15.6,1.25) {$4$};
\node at (17.6,3.75) {$2$};
\node at (19,5.3) {$6$};
\node at (15,5.3) {$5$};
\node at (14.7,3) {$1$};
\draw (22,1) -- (23,1) -- (24,2) --  (25,2) -- (26,1) -- (27,1);
\draw (23,0) -- (23,1);
\draw (26,0) -- (26,1);
\draw (24,2) -- (23,3) -- (22,3);
\draw (23,3) -- (23,4);
\draw (25,2) -- (26,3) -- (27,3);
\draw (26,3) -- (26,4);
\node at (23,-.3) {$4$};
\node at (26,-.3) {$3$};
\node at (24.5,-1) {$A_R$};
\node at (21.7,1) {$8$};
\node at (27.3,1) {$7$};
\node at (27.3,3) {$2$};
\node at (21.7,3) {$1$};
\node at (23,4.3) {$5$};
\node at (26,4.3) {$6$};
\filldraw [color=red] ( 8.5,3.9) circle[radius=2pt];
\filldraw [color=red] (8.9,4.5 ) circle[radius=2pt];
\filldraw [color=red] ( 10.9,4.5) circle[radius=2pt];
\filldraw [color=red] ( 11.5,4.1) circle[radius=2pt];
\filldraw [color=red] ( 10.5,3.6) circle[radius=2pt];
\filldraw [color=red] (8.5,.9 ) circle[radius=2pt];
\filldraw [color=red] ( 9.1,.5) circle[radius=2pt];
\filldraw [color=red] (23.1,.5  ) circle[radius=2pt];
\filldraw [color=red] ( 26.1,.5 ) circle[radius=2pt];
\filldraw [color=red] ( 22.5,.9 ) circle[radius=2pt];
\filldraw [color=red] ( 26.5,1.1 ) circle[radius=2pt];
\filldraw [color=red] ( 23.5,1.4 ) circle[radius=2pt];
\filldraw [color=red] ( 25.5,1.6 ) circle[radius=2pt];
\filldraw [color=red] ( 23.5,2.4 ) circle[radius=2pt];
\filldraw [color=red] ( 25.5,2.6 ) circle[radius=2pt];
\filldraw [color=red] ( 22.5,2.9 ) circle[radius=2pt];
\filldraw [color=red] ( 26.5,3.1 ) circle[radius=2pt];
\filldraw [color=red] (  22.9,3.5) circle[radius=2pt];
\filldraw [color=red] ( 25.9,3.5 ) circle[radius=2pt];
\filldraw [color=red] ( 9.5,3.4 ) circle[radius=2pt];
\filldraw [color=red] ( 10.5,1.6 ) circle[radius=2pt];
\filldraw [color=red] (11.5,1.1  ) circle[radius=2pt];
\filldraw [color=red] ( 11.1,.5 ) circle[radius=2pt];
\filldraw [color=red] ( 9.5,1.4 ) circle[radius=2pt];
\filldraw [color=red] ( 15.4,4.38 ) circle[radius=2pt];
\filldraw [color=red] ( 16.4,3.13 ) circle[radius=2pt];
\filldraw [color=red] ( 17.4,3.13 ) circle[radius=2pt];
\filldraw [color=red] ( 18.4,4.38 ) circle[radius=2pt];
\filldraw [color=red] ( 15.5,3.28 ) circle[radius=2pt];
\filldraw [color=red] ( 15.62,.62 ) circle[radius=2pt];
\filldraw [color=red] ( 16.62,1.87 ) circle[radius=2pt];
\filldraw [color=red] ( 17.62,1.87 ) circle[radius=2pt];
\filldraw [color=red] ( 17.62,.62 ) circle[radius=2pt];
\filldraw [color=red] ( 18.62,.62 ) circle[radius=2pt];
\end{tikzpicture}
\end{center}

\begin{fig} \label{AA2}
{\bf Direction for $\phi_2$ for some cut loci}
\end{fig}
\end{minipage}
\bigskip

\bigskip
\begin{minipage}{6in}

\begin{center}

    \begin{tikzpicture}[scale=.6]
\node at (4,5.5) {$\leftarrow$};
\node at (10.3,5.5) {$\to$};
\node at (16,5.5) {$\leftarrow$};

\draw (1,4) -- (1,7);
\draw (0,5) -- (2,5);
\node at (1,3.3) {$\cE$};
\node at (-.2,5) {$1$};
\node at (2.2,5) {$6$};
\node at (1,7.2) {$5$};
 \filldraw (1,6) circle[radius=2pt];
 \node at (.74,6) {$2$};
 \node at (7,3.3) {$DD'$};
 \node at (13,3.3) {$*$};
 \node at (20,3.3) {$A$};
 \draw (7,4) -- (7,6) -- (8,7);
 \draw (7,6) -- (6,7);
 \draw (6,5) -- (8,5);
 \node at (5.75,5) {$1$};
 \node at (8.25,5) {$6$};
 \node at (6,7.25) {$5$};
 \node at (8,7.25) {$2$};
 \draw (13,4) -- (13,5.5) -- (14,5.5);
 \draw (12,5.5) -- (13,5.5) -- (12,7);
 \draw (13,5.5) -- (14,7);
 \node at (11.75,5.5) {$1$};
 \node at (14.25,5.5) {$6$};
 \node at (12,7.25) {$5$};
 \node at (14,7.25) {$2$};
 \draw (20,4) -- (20,5) -- (21,6) -- (21,7);
 \draw (21,6) -- (22,6);
 \draw (20,5) -- (19,6) -- (19,7);
 \draw (19,6) -- (18,6);
 \node at (22.25,6) {$6$};
 \node at (17.75,6) {$1$};
 \node at (21,7.25) {$2$};
 \node at (19,7.25) {$5$};
 \filldraw [color=red] ( .5,4.88 ) circle[radius=2pt];
 \filldraw [color=red] ( 1.5,5.12 ) circle[radius=2pt];
 \filldraw [color=red] ( .88,5.5 ) circle[radius=2pt];
 \filldraw [color=red] ( .88,6.5 ) circle[radius=2pt];
 \filldraw [color=red] (6.5,4.88  ) circle[radius=2pt];
 \filldraw [color=red] ( 7.5,5.12 ) circle[radius=2pt];
 \filldraw [color=red] ( 6.88,5.5 ) circle[radius=2pt];
 \filldraw [color=red] ( 6.5,6.38 ) circle[radius=2pt];
 \filldraw [color=red] ( 7.5,6.62 ) circle[radius=2pt];
 \filldraw [color=red] (12.5,5.38  ) circle[radius=2pt];
 \filldraw [color=red] ( 13.5,5.62 ) circle[radius=2pt];
 \filldraw [color=red] ( 12.5,6.13 ) circle[radius=2pt];
 \filldraw [color=red] ( 13.5,6.37 ) circle[radius=2pt];
 \filldraw [color=red] ( 19.5,5.38 ) circle[radius=2pt];
 \filldraw [color=red] ( 20.5,5.62 ) circle[radius=2pt];
 \filldraw [color=red] ( 18.5,5.88 ) circle[radius=2pt];
 \filldraw [color=red] ( 21.5,6.12 ) circle[radius=2pt];
 \filldraw [color=red] ( 20.88,6.5 ) circle[radius=2pt];
 \filldraw [color=red] ( 18.88,6.5 ) circle[radius=2pt];
\end{tikzpicture}
\end{center}

\begin{fig} \label{big2}
{\bf Direction for $\phi_2$ in some top halves}
\end{fig}
\end{minipage}

\bigskip

Regarding the connecting segments, note that  each edge of the cube bounds two quadrants, and all cut loci in those two quadrants have parallel connecting segments. Arbitrarily make a uniform choice of a  side of these segments. Let $\phi_2(P,Q)$ for $Q$ in those connecting segments be the minimal geodesic from $P$ to $Q$ which approaches $Q$ from the selected side. Because the quadrants are bounded by diagonals in which the connecting points of cut loci halves are vertices of degree 4 and so are not part of $E_2$, compatibility of the GMPRs for connecting segments in distinct quadrant-pairs is not an issue.

The cut locus of a corner point consists of the three edges and three diagonals emanating from the opposite corner point. Although it is not the case that the cut loci vary continuously with $P$ as $P$ approaches a corner point, we show that our defining $\phi_2$ using rotation around a central point is still continuous at the corner point. In Figure \ref{four}, we depict the cut loci of corner point $V_8$ and of points $P$ close to $V_8$ along the 5-8 edge,  along the curve $DE$ in Figure \ref{figB}, and along the diagonal, adorned with red dots indicating the direction from which the side should be approached using $\phi_2$.

\bigskip
\begin{minipage}{6in}

\begin{center}

    \begin{tikzpicture}[scale=.6]
\draw (0,-2) -- (2,0) -- (4,-2);
\draw (2,-2) -- (2,0) -- (4,0);
\draw (0,0) -- (2,0) -- (0,2);
\draw (7,-4) -- (9,-2) -- (8.9,-.3) -- (8.9,-.15) -- (9,0) -- (7,2);
\draw (7,0) -- (8.9,-.15) -- (11,0);
\draw (7,-2) -- (8.9,-.3) -- (11,-2);
\draw (14,-4) -- (15.83,-.85) -- (15.83,-.34) -- (14,0);
\draw (16,-2) -- (15.83,-.85) -- (15.83,-.68) -- (14,-2);
\draw (18,-2) -- (15.83,-.51) -- (15.83,-.17) -- (16,0);
\draw (18,0) -- (15.83,-.17) -- (14,2);
\draw (21,-4) -- (21,-2) -- (22.85,-.15) -- (23,0) -- (25,0);
\draw (23,-2) -- (22.85,-.3) -- (25,-2);
\draw (22.85,-.3) -- (22.85,-.15) -- (22.7,0) -- (21,2);
\draw (22.7,0) -- (21,0);
\node at (2,-4) {$V_8$};
\node at (9,-4) {edge};
\node at (16,-4) {$DE$};
\node at (23,-4) {diag};
\node at (0,-2.3) {$4$};
\node at (2,-2.3) {$3$};
\node at (4,-2.3) {$7$};
\node at (4.3,0) {$6$};
\node at (-.3,0) {$1$};
\node at (-.3,2) {$5$};
\node at (2.2,.3) {$2$};
\node at (7,-4.3) {$8$};
\node at (7,-2.3) {$4$};
\node at (11,-2.3) {$7$};
\node at (9.2,-2.2) {$3$};
\node at (11.3,0) {$6$};
\node at (6.7,0) {$1$};
\node at (9.2,.3) {$2$};
\node at (6.7,2) {$5$};
\node at (14,-4.3) {$8$};
\node at (14,-2.3) {$4$};
\node at (16,-2.3) {$3$};
\node at (18,-2.3) {$7$};
\node at (13.7,2) {$5$};
\node at (13.7,0) {$1$};
\node at (18.3,0) {$6$};
\node at (16.2,.3) {$2$};
\node at (21,-4.3) {$8$};
\node at (23,-2.3) {$3$};
\node at (20.7,-2) {$4$};
\node at (25,-2.3) {$7$};
\node at (25.3,0) {$6$};
\node at (20.7,0) {$1$};
\node at (20.7,2) {$5$};
\node at (23.2,.3) {$2$};
 \filldraw [color=red] (3,.12 ) circle[radius=2pt];
 \filldraw [color=red] ( .88,1) circle[radius=2pt];
 \filldraw [color=red] ( 1,-.12) circle[radius=2pt];
 \filldraw [color=red] ( 1,-1.12) circle[radius=2pt];
 \filldraw [color=red] ( 2.12,-1) circle[radius=2pt];
 \filldraw [color=red] (3.12,-1 ) circle[radius=2pt];
 \filldraw [color=red] ( 10,.1) circle[radius=2pt];
 \filldraw [color=red] ( 8,.88) circle[radius=2pt];
 \filldraw [color=red] ( 8,-.15) circle[radius=2pt];
 \filldraw [color=red] (8,-1.18 ) circle[radius=2pt];
 \filldraw [color=red] ( 8,-3.12) circle[radius=2pt];
 \filldraw [color=red] ( 9.09,-1.05) circle[radius=2pt];
 \filldraw [color=red] ( 10.08,-1) circle[radius=2pt];
 \filldraw [color=red] (8.8,0 ) circle[radius=2pt];
 \filldraw [color=red] ( 17,.05) circle[radius=2pt];
 \filldraw [color=red] ( 15,.65) circle[radius=2pt];
 \filldraw [color=red] ( 15,-.28) circle[radius=2pt];
 \filldraw [color=red] ( 15,-1.34) circle[radius=2pt];
 \filldraw [color=red] ( 15.15,-2.3) circle[radius=2pt];
 \filldraw [color=red] (16,-1.4 ) circle[radius=2pt];
 \filldraw [color=red] ( 17,-1.15) circle[radius=2pt];
 \filldraw [color=red] ( 15.83,.03) circle[radius=2pt];
 \filldraw [color=red] (15.95,-.78 ) circle[radius=2pt];
 \filldraw [color=red] (24,.13 ) circle[radius=2pt];
 \filldraw [color=red] ( 22,.68) circle[radius=2pt];
 \filldraw [color=red] ( 22,-.1) circle[radius=2pt];
 \filldraw [color=red] (22,-1.12 ) circle[radius=2pt];
 \filldraw [color=red] ( 21.12,-3) circle[radius=2pt];
 \filldraw [color=red] ( 23.09,-1.2) circle[radius=2pt];
 \filldraw [color=red] (24,-1.1 ) circle[radius=2pt];
 \filldraw [color=red] ( 22.92,0) circle[radius=2pt];

\end{tikzpicture}
\end{center}

\begin{fig} \label{four}
{\bf Cut locus of a corner point and of points near it}
\end{fig}
\end{minipage}
\bigskip

For $P$ on the edge, or $DE$, or the diagonal approaching $V_8$, the points $Q$ in the cut locus of $P$ on the segment emanating from vertex number 8 approach a point $Q_0$ which is not in the cut locus of $V_8$.
Then $(V_8,Q_0)$ is in $E_1$, and so we don't have to worry about the limit of $\phi_2(P,Q)$.

The set $E_3$ consists of the 56 points $(P,Q)$ such that $Q$ is a vertex of the cut locus of $P$ of degree 5 or 6. Since this is a discrete set, the function $\phi_3$ can be defined arbitrarily. Eight of these points have $P$ a corner point of the cube and $Q$ the opposite corner point. The cut locus of a corner point was depicted in the left side of Figure \ref{four}.

Another point in $E_3$ has $P$ equal to the point $*$, which was introduced in Figure \ref{DF}. The top half of its cut locus was shown in Figure \ref{big1}; we show its entire cut locus in Figure \ref{here}.

\bigskip
\begin{minipage}{6in}

\begin{center}

    \begin{tikzpicture}[scale=.7]
\draw (2,1) -- (1,1) -- (0,2) -- (0,4) -- (2,4);
\draw (0,2) -- (-1,1);
\draw (0,3) -- (-2,3);
\draw (-2,4) -- (0,4) -- (1,5);
\draw (0,4) -- (-1,5);
\draw (1,0) -- (1,1);
\node at (1,-.3) {$3$};
\node at (2.3,1) {$7$};
\node at (-1.2,.8) {$8$};
\node at (-2.3,3) {$4$};
\node at (-2.3,4) {$1$};
\node at (2.3,4) {$6$};
\node at (1.2,5.2) {$2$};
\node at (-1.2,5.2) {$5$};
\node at (.25,3.75) {$Q$};
\filldraw (0,4) circle[radius=2pt];
\end{tikzpicture}
\end{center}

\begin{fig} \label{here}
{\bf Cut locus of $*$}
\end{fig}
\end{minipage}
\bigskip

For $P=*$ and $Q$ the indicated degree-5 vertex, we place $(P,Q)$ in $E_3$. The vertical reflection $*'$ of $*$ has cut locus a reindexed vertical reflection of Figure \ref{here}, and we place $(*',Q')$, where $Q'$ is its degree-5 vertex, in $E_3$. Each quadrant has two analogous points in $E_3$. There are 24 quadrants, so 48 such points altogether.

Two more sets, $E_4$ and $E_5$, are required for $(P,Q)$ with $Q$ a vertex of degree 3 or 4 of the cut locus of $P$. In Figure \ref{big3}, we depict this for $Q$ in the top half of cut loci of points $P$ in the left quadrant of the 5678 face. Because the degree-5 vertex of $*$ has been placed in $E_3$, we need not worry about continuity as $*$ is approached. We place in $E_4$ all $(P,Q)$ in which $Q$ can be  approached from the 2-5 region, and depict them by solid disks. In $E_5$ we place those $(P,Q)$ not in $E_4$ which can be  approached from the 2-6 region, and depict them by open circles. The cases, in $D$, $F$, and $DF$, where $Q$ cannot be approached from the 2-5 or 2-6 regions are placed in $E_4$ or $E_5$ as indicated. 
Note that the degree-2 vertex when $P$ is on the edge $\cE$ is in $E_2$, which was already considered. The GMPRs  $\phi_4$ and $\phi_5$ choose the minimal geodesic from $P$ to $Q$ which approach $Q$ from region 2-5, 2-6, or 1-5.

\bigskip
\begin{minipage}{6in}

\begin{center}

    \begin{tikzpicture}[scale=.6]
\draw (1,11) -- (1,13) -- (2,14);
\draw (7,11) -- (7,14);
\draw (13,11) -- (13,11.7) -- (15,14);
\draw (14,-3) -- (14,-2.3) -- (12,0);
\draw (20,11) -- (20,14);
\node at (4,12.5) {$\to$};
\node at (4,5.5) {$\leftarrow$};
\node at (10.3,12.5) {$\leftarrow$};
\node at (17,12.5) {$\to$};
\node at (4,-1.5) {$\to$};
\node at (10.3,-1.5) {$\leftarrow$};
\node at (17,-1.5) {$\to$};
\node at (1,10.3) {$D$};
\node at (7,10.3) {$DF$};
\node at (13,10.3) {$F$};
\node at (20,10.3) {$FA$};
\draw (1,11.8) -- (0,11.8);
\draw (1,12.5) -- (2,12.5);
\draw (1,13) -- (0,14);
\node at (-.15,11.8) {$1$};
\node at (2.15,12.5) {$6$};
\node at (2,14.15) {$2$};
\node at (0,14.15) {$5$};
\draw (7,12) -- (6,12);
\draw (6,13.2) -- (7,13) -- (8,13.2);
\node at (5.8,12) {$1$};
\node at (5.8,13.2) {$5$};
\node at (8.2,13.2) {$6$};
\node at (7,14.2) {$2$};
\draw (13,11.7) -- (12,11.7);
\draw (13.7,12.5) -- (12.7,13);
\draw (14.4,13.3) -- (13.8,14);
\node at (11.8,11.7) {$1$};
\node at (12.5,13) {$5$};
\node at (13.7,14.1) {$2$};
\node at (15,14.2) {$6$};
\draw (19,12) -- (20,12) -- (19.3,12.8);
\draw (20,13) -- (21,13);
\node at (18.8,12) {$1$};
\node at (19.1,12.8) {$5$};
\node at (21.2,13) {$6$};
\node at (20,14.2) {$2$};
\draw (1,4) -- (1,6) -- (0,7);
\draw (0,5) -- (2,5);
\node at (1,3.3) {$\cE$};
\node at (-.2,5) {$1$};
\node at (2.2,5) {$6$};
\node at (0,7.2) {$5$};
\node at (1,8.8) {$\downarrow$};
 \filldraw (1,6) circle[radius=2pt];
 \node at (3.5,8.8) {$\searrow$};
 \node at (1.3,6) {$2$};
 \node at (7,3.3) {$DD'$};
 \node at (20,3.3) {$A$};
 \draw (7,4) -- (7,6) -- (8,7);
 \draw (7,6) -- (6,7);
 \draw (6,5) -- (8,5);
 \node at (5.75,5) {$1$};
 \node at (8.25,5) {$6$};
 \node at (6,7.25) {$5$};
 \node at (8,7.25) {$2$};
 \draw (20,4) -- (20,5) -- (21,6) -- (21,7);
 \draw (21,6) -- (22,6);
 \draw (20,5) -- (19,6) -- (19,7);
 \draw (19,6) -- (18,6);
 \node at (20,8.8) {$\uparrow$};
 \node at (22.25,6) {$6$};
 \node at (17.75,6) {$1$};
 \node at (21,7.25) {$2$};
 \node at (19,7.25) {$5$};
 \node at (1,-3.7) {$D'$};
 \node at (7,-3.7) {$D'F'$};
 \node at (14,-3.7) {$F'$};
 \node at (20,-3.7) {$F'A$};
 \node at (1,1.8) {$\uparrow$};
 \node at (20,1.8) {$\downarrow$};
 \node at (4,1.8) {$\nearrow$};
 \draw (1,-3) -- (1,-1) -- (2,0);
 \draw (1,-1) -- (0,0);
 \draw (1,-2.3) -- (2,-2.3);
 \draw (1,-1.6) -- (0,-1.6);
 \node at (2.25,-2.3) {$6$};
 \node at (-.25,-1.6) {$1$};
 \node at (0,.25) {$5$};
 \node at (2,.25) {$2$};
 \draw (7,-3) -- (7,0);
 \draw (7,-2) -- (8,-2);
 \draw (6,-.8) -- (7,-1) -- (8,-.8);
 \node at (8.25,-2) {$6$};
 \node at (5.75,-.8) {$1$};
 \node at (8.25,-.8) {$2$};
 \node at (7,.25) {$5$};
 \draw (20,-3) -- (20,0);
 \draw (21,-2) -- (20,-2) -- (21,-1);
 \draw (20,-1) -- (19,-1);
 \node at (21.25,-2) {$6$};
 \node at (21.25,-1) {$2$};
 \node at (18.75,-1) {$1$};
 \node at (20,.25) {$5$};
 \filldraw ( 7.14,-.86) circle[radius=3pt];
 \draw ( 7.14,-1.86) circle[radius=3pt];
 \filldraw ( 20.14,-1) circle[radius=3pt];
 \filldraw ( 20.14,-1.75) circle[radius=3pt];
 \filldraw ( 1,-.85) circle[radius=3pt];
 \draw (1.14,-2.2 ) circle[radius=3pt];
\draw (1.15,-1.6) circle[radius=3pt];
 \filldraw (20,5.25 ) circle[radius=3pt];
 \filldraw ( 20.84,6) circle[radius=3pt];
 \filldraw ( 19.16,6) circle[radius=3pt];
 \filldraw ( 7,6.25) circle[radius=3pt];
 \draw ( 7.16,5.16) circle[radius=3pt];
 \draw ( 1.16,5.16) circle[radius=3pt];
 \filldraw ( 19.84,12.4) circle[radius=3pt];
 \filldraw (13.7,12.7 ) circle[radius=3pt];
 \filldraw ( 14.2,13.3) circle[radius=3pt];
  \filldraw (13.3,-1.3 ) circle[radius=3pt];
 \filldraw ( 12.8,-.7) circle[radius=3pt];
 \filldraw ( 6.84,13.25) circle[radius=3pt];
 \filldraw (1,13.25 ) circle[radius=3pt];
 \draw ( 1.16,12.66) circle[radius=3pt];
  \filldraw ( 19.85,13) circle[radius=3pt];
 \filldraw (.85,11.95) circle[radius=3pt];
 \draw (6.85,12.15) circle[radius=3pt];
 \draw (14.15,-2.15) circle[radius=3pt];
\draw (14,-2.3) -- (15,-2.3);
\draw (13.3,-1.5) -- (14.3,-1);
\draw (12.6,-.7) -- (13.2,0);
\node at (15.2,-2.3) {$6$};
\node at (14.5,-1) {$2$};
\node at (13.3,.1) {$5$};
\node at (12,.2) {$1$};
\draw (12.87,11.85) circle[radius=3pt];
\end{tikzpicture}
\end{center}

\begin{fig} \label{big3}
{\bf Approach to vertices of cut loci}
\end{fig}
\end{minipage}
\bigskip

Each arrow in Figure \ref{big3} represents points $P$ in a region approaching points in its boundary. A segment in a cut locus shrinks to a point. Continuity of each separate function $\phi_i$ should be clear. 

All quadrants of all faces are handled similarly, using permutations of corner-point numbers. In particular, if $P$ is in the analogue of the large region $A$ in any quadrant, and $Q$ is a vertex of the cut locus of $P$, then $(P,Q)$ is in $E_4$. Since regions $A$ are the only regions abutting a diagonal, (see Figures \ref{figA} or \ref{DF}) if, for the degree-3 and degree-4 vertices $Q$ of the cut locus of points $P$ in the diagonals of the quadrants, we place $(P,Q)$ in $E_5$, then there is no worry about continuity of $\phi$ functions at these points, as long as we make consistent choices. The cut locus of the center  of a quadrant has four arms emanating from a central vertex, with a degree-2 vertex on each arm. In the 5678 face, it is obtained from the cut locus of the diagonal pictured in Figure \ref{AA} by collapsing the arms from 1 and 3 to a point. We make an arbitrary choice of $\phi_5(P,Q)$ when $Q$ is the degree-4 vertex of the center $P$ of a face, and then choose $\phi_5(P,Q)$ compatibly when $Q$ is the degree-4 vertex of points $P$ on the diagonals of the face.

In the paragraph following Figure \ref{big1}, we described how bottom halves of cut loci are determined from top halves of cut loci, and we put these $(P,Q)$ with $Q$ a vertex of degree 3 or 4 in the bottom half of the cut locus of $P$ in sets $E_i$ with GMPRs $\phi_i$, $4\le i\le5$, analogously to what was done for the top halves. 

The cube is composed of twelve regions such as that in Figure \ref{both}, each bounded by half-diagonals of faces, and symmetrical about an edge of the cube. For cut-locus vertices of degree 3 or 4, the GMPRs on the diagonals are in separate sets from those on the $A$-regions abutting them, and so the twelve regions can be considered separately. Once we have defined the GMPRs for the region containing the 5-8 edge, GMPRs on the other regions can be defined similarly, using permutations of corner-point numbers.

\bigskip
\begin{minipage}{6in}

\begin{center}

    \begin{tikzpicture}[scale=.4]
\draw (0,0) -- (5,5) -- (0,10) -- (0,0) -- (-5,5) -- (0,10);
\draw (0,0) to[out=110,in=-110] (0,10);
\draw (0,0) to[out=70,in=-70] (0,10);
\node at (0,-.4) {$5$};
\node at (0,10.4) {$8$};
\node at (3,5) {$A$};
\node at (-3,5) {$\widetilde A$};

\end{tikzpicture}
\end{center}

\begin{fig} \label{both}
{\bf A subset of the cube}
\end{fig}
\end{minipage}
\bigskip

The cut locus of a point $\widetilde{P}$ on the left half of Figure \ref{both} is obtained from that of its horizontal reflection by applying the permutation $(1\ 6)(4\ 7)$ and flipping horizontally. In Figure \ref{flip}, we show top halves of cut loci for points in the reflection of the edge $\cE$ and of the regions abutting it, together with their GMPRs for vertices of degree $\ge3$. Note that $\cE=\widetilde{\cE}$, so they have the same cut loci, but the depictions of them from the star unfolding are different depending on whether they are the left or right edge.

\bigskip
\begin{minipage}{6in}

\begin{center}

    \begin{tikzpicture}[scale=.6]
\draw (0,0) -- (0,3) -- (1,4);
\draw (-1,1) -- (0,1) -- (0,2) -- (1,2);
\draw (0,3) -- (-1,4);
\node at (-1.3,1) {$1$};
\node at (1.3,2) {$6$};
\node at (-1.3,4) {$2$};
\node at (1.3,4) {$5$};
\node at (0,-.6) {$\widetilde{D'}$};
\filldraw (-.2,1.2) circle[radius=3pt];
\draw (.2,15) circle[radius=3pt];
\draw (0,7) -- (0,9) -- (1,10);
\draw (-1,8) -- (1,8);
\draw (0,9) -- (-1,10);
\node at (-1.3,8) {$1$};
\node at (1.3,8) {$6$};
\node at (-1.3,10) {$2$};
\node at (1.3,10) {$5$};
\node at (0,6.4) {$\widetilde{DD'}$};
\draw (0,13) -- (0,16) -- (1,17);
\draw (0,16) -- (-1,17);
\draw (-1,15) -- (0,15) -- (0,14) -- (1,14);
\node at (1.3,14) {$6$};
\node at (-1.3,15) {$1$};
\node at (-1.3,17) {$2$};
\node at (1.3,17) {$5$};
\node at (0,12.4) {$\widetilde{D}$};
\node at (0,10.8) {$\downarrow$};
\node at (0,4.8) {$\uparrow$};
\node at (2.5,8) {$\to$};
\node at (2.5,10.8) {$\searrow$};
\node at (2.5,4.8) {$\nearrow$};
\draw (5,7) -- (5,9) -- (6,10);
\draw (4,8) -- (6,8);
\node at (3.7,8) {$1$};
\node at (6.3,8) {$6$};
\node at (6.3,10) {$5$};
\node at (4.7,9) {$2$};
\node at (5,6.2) {$\widetilde{\cE}$};
\draw ( .14,2.14) circle[radius=3pt];
\draw ( .14,8.14) circle[radius=3pt];
\draw ( .14,14.14) circle[radius=3pt];
\draw ( 5.14,8.14) circle[radius=3pt];
\filldraw (0,3.24 ) circle[radius=3pt];
\filldraw ( 0,9.24) circle[radius=3pt];
\filldraw ( 0,16.24) circle[radius=3pt];
\end{tikzpicture}

\end{center}

\begin{fig} \label{flip}
{\bf Horizontal reflection}
\end{fig}
\end{minipage}
\bigskip

The sets $E_i$ and functions $\phi_i$, $4\le i\le5$, for the left side of Figure \ref{both} are defined like those on the primed (or unprimed) version on the right side, with 2 and 5 interchanged. Compare $\widetilde{D'}$ (resp.~$\widetilde{D}$) in Figure \ref{flip} with $D$ (resp.~$D'$) in Figure \ref{big3}. This completes the proof of Theorem \ref{thm1}.

\section{Lower bound}\label{lower}
In this section we prove the following result, which is the lower bound in Theorem \ref{mainthm}. The method is similar to that developed by Recio-Mitter in \cite{david} and applied by the author in \cite{tet}.
\begin{thm} If $X$ is a cube, it is impossible to partition $X\times X$ into sets $E_i$, $1\le i\le4$, with a GMPR $\phi_i$ on $E_i$.\end{thm}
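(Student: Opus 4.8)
The plan is to argue by contradiction, following the strategy of Recio-Mitter \cite{david} and of \cite{tet}: assume $X\times X=E_1\sqcup E_2\sqcup E_3\sqcup E_4$ with a GMPR $\phi_i$ on $E_i$, and derive a contradiction from the behavior of the $\phi_i$ near pairs $(P,Q)$ for which $Q$ is a high-degree vertex of the cut locus of $P$. Two elementary tools drive the argument. The first is a \emph{sector lemma}: if $(P_0,Q_0)\in E_i$ and $Q_0$ is a vertex of degree $d\ge 2$ in the cut locus of $P_0$, then there is a neighborhood $U$ of $(P_0,Q_0)$ in $X\times X$ such that $E_i\cap U$ is contained in the closed sector $S$ determined by the single minimal geodesic $g=\phi_i(P_0,Q_0)$; here $S$ is the set of $(P,Q)\in U$ one of whose minimal geodesics is a small perturbation of $g$. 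In the Voronoi picture of Figure \ref{cutloc}, $S$ is one Voronoi cell of $P$ (the one on which $g$ is uniquely minimal) together with the two cut-locus arcs bounding it, so that all other Voronoi cells and cut-locus arms meeting $Q_0$ are excluded from $E_i\cap U$. This is immediate from continuity of $\phi_i$ and the explicit local picture of minimal geodesics at a cut-locus vertex, exactly as in \cite{tet}. The second tool is the local constancy of the ``side'' of a multiplicity-$2$ pair: along any connected subset of the multiplicity-exactly-$2$ locus contained in a single $E_i$, the choice of which of the two geodesics $\phi_i$ selects is locally constant, hence constant on that subset.

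Next I would isolate the pairs that force the count. For a generic $P$ the cut locus has only leaves and degree-$3$ vertices, but along the distinguished loci of Section \ref{background} there appear vertices of degree $4$ (the center of a face, and the point where the connecting segment of the two halves of a cut locus collapses on a diagonal of a quadrant), degree $5$ (the point $*$ of Figure \ref{DF}, with full cut locus Figure \ref{here}), and degree $6$ (a corner point $V$ paired with the opposite corner, Figure \ref{four}). Around each such $(P_0,Q_0)$, every $E_i$ that contains $(P_0,Q_0)$ is confined by the sector lemma to two cyclically adjacent cut-locus arms at $Q_0$. One then follows the individual cut-locus arms as $P$ and $Q$ vary through the region decomposition of Figure \ref{figA} (in the simplified forms of Figures \ref{big1} and \ref{big3}), gluing across regions by the identifications recorded in Section \ref{background}: the reflection $(1\ 4)(2\ 3)(5\ 8)(6\ 7)$ relating top and bottom halves of cut loci, and the rotation $(1\ 4\ 3\ 2)(5\ 8\ 7\ 6)$ relating adjacent quadrants. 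These gluings carry a nontrivial monodromy on the side-choices, so that a consistent assignment of the four sets to all the relevant multiplicity-$2$ arms, compatible with the sector constraints at the degree-$\ge 4$ vertices, is over-determined.

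The contradiction is then a colouring argument: the sector constraints at the degree-$\ge4$ vertices, propagated along cut-locus arms by the side-constancy lemma and closed up using the monodromy of the gluings, cannot all be met with four sets — some arm is forced to receive a fifth colour, or some degree-$\ge4$ vertex has more incident sectors than four sets can supply. The bulk of the work, and the step I expect to be the main obstacle, is precisely this global bookkeeping: one must record, from the labelled-graph data of \cite{DG} as displayed in Figures \ref{big1}, \ref{big3}, \ref{four}, and \ref{here}, exactly how the cut-locus arms of the relevant regions are incident to one another and to the high-degree vertices, and then verify that \emph{every} attempted distribution of the four sets over these pairs produces a clash. (Sets that do not contain a given high-degree vertex are not constrained by the sector lemma there, so the argument cannot be purely local; it is the interaction of several such vertices, tied together by the symmetries, that is delicate.) As in \cite{david} and \cite{tet}, once this combinatorial impossibility is established on a single well-chosen portion of $X\times X$, the obstruction is inherited by all of $X\times X$, so no partition into four sets with GMPRs exists, and hence $\GC(X)\ge 5$.
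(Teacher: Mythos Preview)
Your proposal is a strategy sketch rather than a proof, and the crucial step you explicitly flag as ``the main obstacle'' — the global combinatorial verification — is never carried out. More seriously, the mechanism you propose for that step does not obviously exist. The ``monodromy'' you invoke comes from the reflection $(1\ 4)(2\ 3)(5\ 8)(6\ 7)$ and the rotation $(1\ 4\ 3\ 2)(5\ 8\ 7\ 6)$, but these are symmetries of the cube that relate \emph{different} cut loci of \emph{different} source points; they do not glue a single family of cut-locus arms into a closed loop on which a side-choice could have holonomy. Your side-constancy lemma only constrains $\phi_i$ on connected pieces of a single $E_i$, and since the $E_i$ are merely locally compact, a given arm may be shattered among all four sets in arbitrarily complicated ways. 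So it is unclear what finite combinatorial object you would actually be colouring, or why four colours would fail.

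The paper's proof is structurally different and avoids all of this. It is a nested-sequence (diagonal) argument that never needs global consistency or monodromy: starting from the multiplicity-$6$ pair $(V_8,V_2)\in E_1$, one produces a sequence $(P_n,Q_n)$ on the curve $DE$ with $Q_n$ a degree-$4$ vertex, forced (after passing to a subsequence and ruling out $E_1$ by the sector obstruction) into $E_2$; then $(P_{n,m},Q_{n,m})$ in region $D$ or $E$ with $Q_{n,m}$ a degree-$3$ vertex, forced into $E_3$; then $(P_{n,m},Q_{n,m,\ell})$ with $Q_{n,m,\ell}$ on an edge (multiplicity $2$), forced into $E_4$; finally $(P_{n,m},Q_{n,m,\ell,k})$ with $Q_{n,m,\ell,k}$ off the cut locus (multiplicity $1$), which by four separate diagonal-limit arguments cannot lie in any of $E_1,\dots,E_4$. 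The chain $6\to 4\to 3\to 2\to 1$ in multiplicities is what makes five sets necessary; each exclusion uses only your sector lemma locally plus a diagonal sequence back to an earlier level, never any global colouring. If you want to repair your approach, the missing idea is precisely this descending chain of nested sequences through vertices of strictly decreasing degree, not a monodromy obstruction.
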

\begin{proof}
Assume such a decomposition exists. Note that the specific $E_i$ of the previous section are not relevant here.
Let $V_i$ be the corner point numbered $i$ in our treatment of the cube. The cut locus of $V_8$ is as in the left side of Figure \ref{four}.  It consists of edges from $V_2$ to corner points 1, 3, and 6, and diagonals from $V_2$ to corner points 4, 5, and 7.

Let $E_1$ be the set containing $(V_8,V_2)$, and suppose $\phi_1(V_8,V_2)$ is the geodesic passing between $V_3$ and $V_4$. Other cases can be handled in the same way, using a permutation of corner points.

 Points $P$ on the curve $DE$ of Figure \ref{figB} have top half of cut loci as in Figure \ref{24}. (This is part of the curve $DF$ in Figure \ref{DF}.)

\bigskip
\begin{minipage}{6in}

\begin{center}

    \begin{tikzpicture}[scale=.7]
    \draw (0,0) -- (0,3);
    \draw (0,1) -- (-2,3);
    \draw (-2,4) -- (0,2) -- (2,3);
    \node at (0,3.3) {$2$};
    \node at (2.3,3) {$6$};
    \node at (-2.3,4) {$5$};
    \node at (-2.3,3) {$1$};
    \node at (1,1.7) {$\a$};
    \node at (1,3.1) {$\b$};
    \node at (-.8,3.5) {$\g$};
    \node at (-1,2.4) {$\d$};
    \node at (0.2,1.8) {$Q$};
     \filldraw  (0,2 ) circle[radius=2pt];
\end{tikzpicture}
\end{center}

\begin{fig} \label{24}
{\bf Top half of cut locus of points on curve $DE$}
\end{fig}
\end{minipage}
\bigskip

Let $Q$ be the vertex of degree 4, and $\a$, $\b$,$\g$, and $\d$ the four regions of approach to $Q$, as indicated in the figure, which varies with $P$. As $P$ approaches $V_8$ along $DE$, Figure \ref{24} approaches the top half of the cut locus of $V_8$ (Figure \ref{four}); the segment from $Q$ to 2 shrinks to the point $V_2$, and the other vertical segment collapses, too. Suppose there were a sequence of points $P_n$ on $DE$ approaching $V_8$ with $Q_n$ the point $Q$ in Figure \ref{24} and $(P_n,Q_n)\in E_1$. Then $\phi_1(P_n,Q_n)$ would approach
$\phi_1(V_8,V_2)$, but this is impossible, since they pass through different regions. Therefore there must be a sequence $P_n$ on $DE$ approaching $V_8$ for which $(P_n,Q_n)$ is in a different set, $E_2$, and restricting further, we may assume that $\phi_2(P_n,Q_n)$ all pass through the same region, $\a$, $\b$, $\g$, or $\d$.

Points in region $D$ have top half of cut locus as in Figure \ref{26}. See Figure \ref{big1}.

\bigskip
\begin{minipage}{6in}

\begin{center}

    \begin{tikzpicture}[scale=.7]
    \draw (0,0) -- (0,3) -- (1,4);
    \draw (0,3) -- (-2,5);
    \draw (0,2) -- (2,4);
    \draw (0,1) -- (-2,3);
    \node at (2.3,4) {$6$};
    \node at (1,4.3) {$2$};
    \node at (-2.3,3) {$1$};
    \node at (-2.3,5) {$5$};
    \node at (.35,1.8) {$Q_\g$};
    \node at (0,3.6) {$Q_\a$};
     \filldraw  (0,2 ) circle[radius=2pt];
      \filldraw  ( 0,3) circle[radius=2pt];
\end{tikzpicture}
\end{center}

\begin{fig} \label{26}
{\bf Top half of cut locus of points in region $D$}
\end{fig}
\end{minipage} 
\bigskip

Let $Q_\a$ and $Q_\g$ be the indicated vertices in Figure \ref{26}. If $\phi_2(P_n,Q_n)$ passes through region $\a$ (resp.~$\g$) in Figure \ref{24}, consider a sequence of points $P_{n,m}$ in region $D$ approaching $P_n$, and let the associated cut-locus points $Q_{n,m}$ be $Q_a$ (resp.~$Q_\g$). Such a sequence $(P_{n,m},Q_{n,m})$ cannot have a convergent subsequence in $E_2$, since, if it did, reindexing, $\phi_2(P_{n,m},Q_{n,m})\to \phi_2(P_n,Q_n)$, but paths going to $Q_\a$ (resp.~$Q_\g$) cannot approach a path passing through region $\a$ (resp.~$\g$) in Figure \ref{24}. So we may restrict to points $(P_{n,m},Q_{n,m})$ not in $E_2$, and restricting further, we may assume they are all in the same $E_i$. If $i=1$, then $(P_{n,n},Q_{n,n})$\footnote{This should really be $(P_{n,n'},Q_{n,n'})$ for some $n'\ge n$, but we will simplify the notation as we have here and subsequently.} would approach $(V_8,V_2)$ and would have $\phi_1(P_{n,n},Q_{n,n})\to \phi_1(V_8,V_2)$, which is impossible since these paths pass through different regions.\footnote{Here, as in many other parts of this proof, when we say ``pass through'' a region, we mean, of course, that the portion of the curve as it approaches the limit point  passes through the region.} Thus all $(P_{n,m},Q_{n,m})$ must be in either $E_3$ or $E_4$, and we may assume they are all in $E_3$.

A similar argument works if all $\phi_2(P_n,Q_n)$ pass through region $\b$ or $\d$ in Figure \ref{24}, using points $P_{n,m}$ in region $E$ of Figure \ref{figB} approaching $P_n$, and $Q_{n,m}$ the points $Q_\b$ or $Q_\d$ in Figure \ref{E}, which depicts the top half of the cut locus of points in region $E$ of Figure \ref{figB}. (Region $E$ of Figure \ref{figB} is part of region $F$ of Figure \ref{DF}.)
Thus we conclude that all $(P_{n,m},Q_{n,m})$ are in $E_3$, regardless of whether $\phi_2(P_n,Q_n)$ passed through $\a$, $\b$, $\g$, or $\d$.

\bigskip
\begin{minipage}{6in}

\begin{center}

    \begin{tikzpicture}[scale=.7]
 \draw (0,0) -- (0,4);
 \draw (0,1) -- (-2,4);
 \draw (0,2) -- (-2,5);
 \draw (0,3) -- (2,4);
 \node at (.4,2.8) {$Q_\d$};
    \node at (.4,1.9) {$Q_\b$};
     \filldraw  (0,2 ) circle[radius=2pt];
      \filldraw  ( 0,3) circle[radius=2pt];
\node at (2.3,4) {$6$};
\node at (0,4.3) {$2$};
\node at (-2.3,4) {$1$};
\node at (-2.3,5) {$5$};

\end{tikzpicture}
\end{center}

\begin{fig} \label{E}
{\bf Top half of cut locus of points in region $E$}
\end{fig}
\end{minipage} 
\bigskip

Suppose $\phi_2(P_n,Q_n)$ pass through region $\a$ in Figure \ref{24}, and $Q_{n,m}$ were the points $Q_\a$ in Figure \ref{26}. An argument similar to the one that we will provide works if $\a$ is replaced by $\b$, $\g$, or $\d$. All that matters is that the vertex $Q_\a$ (or its analogue) has degree 3. In Figure \ref{D}, we isolate the relevant portion of Figure \ref{26}, with $Q_{n,m}$ at the indicated vertex.

\bigskip
\begin{minipage}{6in}

\begin{center}

    \begin{tikzpicture}[scale=.4]
     \filldraw  (0,2 ) circle[radius=3.5pt];
     \draw (0,0) -- (0,2) -- (-2,4);
     \draw (0,2) -- (2,3);
     \node at (2.3,3) {$2$};
     \node at (-2.3,4) {$5$};

\end{tikzpicture}
\end{center}

\begin{fig} \label{D}
{\bf A portion of Figure \ref{26}}
\end{fig}
\end{minipage} 
\bigskip

We may assume, after restricting, that all $\phi_3(P_{n,m},Q_{n,m})$ pass through the same one of the three regions in Figure \ref{D}, which we call region $R$. For a sequence $Q_{n,m,\ell}$ approaching $Q_{n,m}$ on the edge not bounding $R$, $(P_{n,m},Q_{n,m,\ell})$ cannot have a convergent subsequence in $E_3$, since $\phi(P_{n,m},Q_{n,m,\ell})$ cannot pass through $R$. Restricting more, we may assume that all $(P_{n,m},Q_{n,m,\ell})$ are in the same $E_i$, with $i\ne3$. If $i=2$, then $\phi_2(P_{n,m},Q_{n,m,m})$ approaches $ \phi_2(P_n,Q_n)$, but geodesics from $P_{n,m}$ to points close to $Q_\a$ in Figure \ref{26} ultimately are above the arm from $Q$ to corner point 6 in Figure \ref{24}, while $\phi_2(P_n,Q_n)$ is below it. (Recall that the cut locus in Figure \ref{24} is approached by those in Figure \ref{26}.)
 So $i\ne2$. Also, $i$ cannot equal 1, because if so, $\phi_1(P_{n,n},Q_{n,n,n})\to \phi_1(V_8,V_2)$, but the latter is between vertices 3 and 4 in the lower half of the cut locus. Therefore $i=4$.

We may assume, after restricting, that all the $\phi_4(P_{n,m},Q_{n,m,\ell})$ come from the same side of the edge in Figure \ref{D} which contains the points $Q_{n,m,\ell}$. Choose points $Q_{n,m,\ell,k}$ in the complement of the cut locus of $P_{n,m}$ on the opposite side of the edge, and converging to $Q_{n,m,\ell}$. Restricting, we may assume that all $(P_{n,m},Q_{n,m,\ell,k})$ are in the same $E_i$. Note that $\phi_i(P_{n,m},Q_{n,m,\ell,k})$ is the unique geodesic between these points. This $i$ cannot equal 4 since $\phi_i(P_{n,m},Q_{n,m,\ell,k})$ and
   $\phi_4(P_{n,m},Q_{n,m,\ell})$ approach the edge from opposite sides. It cannot equal 3 since $\phi_i(P_{n,m},Q_{n,m,\ell,\ell})$ and
   $\phi_3(P_{n,m},Q_{n,m})$ approach the vertex in Figure \ref{D} from different regions. It cannot equal 2 since $\phi_i(P_{n,m},Q_{n,m,m,m})$ and
   $\phi_2(P_{n},Q_{n})$ approach the vertex in Figure \ref{24} from different regions. And, it cannot equal 1 since $\phi_i(P_{n,n},Q_{n,n,n,n})$ and
   $\phi_1(V_8,V_2)$ approach  vertex 2 from different regions. Therefore a fifth $E_i$ is required.
\end{proof}
\def\line{\rule{.6in}{.6pt}}


\begin{thebibliography}{99}
\bibitem{star97}
P.~Agarwal, B.~Arnov, J.~O'Rourke, and C.~Schevon,
{\em Star Unfolding of a Polytope with Applications},
  SIAM J. Comput.,
  {\bf 26}
 (1997) 
  1689--1713.
\bibitem{BH} M.~R.~Bridson and A.~Haefliger, {\em Metric spaces of non-positive curvature}, Grundelehren der mathematischen Wissenschaften Vo.~319, Springer-Verlag (1999).
\bibitem{tet} D.~M.~Davis, {\em Geodesic complexity of a tetrahedron}, {\tt arXiv 2306.11059}.

\bibitem{DG} D.~M.~Davis and M.~Guo, {\em Isomorphism classes of cut loci for a cube}, {\tt arXiv 2301.11366v1}.

\bibitem{Far} M.Farber, {Topological complexity of motion planning}, Discr Comp Geom {\bf 29} (2003) 211--221.
\bibitem{cut21} J.~O'Rourke and C.~Vilcu,
  {\em Cut Locus Realizations on Convex Polyhedra},
  CCCG (2021), {\tt arXiv 2102.11097}.

\bibitem{david}
D.~Recio-Mitter,
{\em Geodesic complexity of motion planning},
J. Appl. Comput. Topol, {\bf 5} (2021) 141--178.

\end{thebibliography}
\end{document}